\documentclass[11pt,oneside]{amsart}
\setlength{\textwidth}{15cm} \setlength{\evensidemargin}{0cm}
\setlength{\oddsidemargin}{0.5cm}
\setlength{\textheight}{21cm}
\setlength{\voffset}{-1cm}
\usepackage{amsfonts}
\usepackage{amsmath}
\usepackage{amssymb}
\usepackage{amsthm}
\usepackage{mathrsfs}
\usepackage[bookmarksnumbered]{hyperref}
\usepackage[all]{xy}
\SelectTips{cm}{}

\title{Homological Localisation of Model Categories}

\author{David Barnes}
\author{Constanze Roitzheim}
%\email{Constanze.Roitzheim@glasgow.ac.uk, D.Barnes@sheffield.ac.uk}
\address{D. Barnes\\ Department of Pure Mathematics\\
University of Sheffield \\ Hicks Building\\ Hounsfield Road\\
Sheffield S3 7RH, UK}
\address{C. Roitzheim \\ University of Kent \\ School of Mathematics, Statistics and Actuarial Science\\ Cornwallis Building\\ Canterbury, Kent, CT2 7NF,
UK}
\thanks{The first author was supported by EPSRC grant EP/H026681/1, the second author by EPSRC grant EP/G051348/1. }
%\subjclass[2000]{Primary ??; Secondary ??}

\date{$8^\text{th}$ December 2012}

\DeclareMathOperator{\Ho}{Ho}

\DeclareMathOperator{\Hom}{Hom}
\DeclareMathOperator{\sset}{sSet}

\DeclareMathOperator{\Ch}{Ch}

\DeclareMathOperator{\map}{map}
\DeclareMathOperator{\Map}{Map}
\DeclareMathOperator{\id}{Id}
\DeclareMathOperator{\leftmod}{--mod}
\DeclareMathOperator{\rightmod}{mod--\!}

\newcommand{\lradjunction}{\,\,\raisebox{-0.1\height}{$\overrightarrow{\longleftarrow}$}\,\,}
\newcommand{\rladjunction}{\,\,\raisebox{-0.1\height}{$\overleftarrow{\longrightarrow}$}\,\,}

\newcommand{\ncal}{\mathcal{N}}
\newcommand{\mcal}{\mathcal{M}}
\newcommand{\ccal}{\mathcal{C}}
\newcommand{\dcal}{\mathcal{D}}
\newcommand{\ecal}{\mathcal{E}}
\newcommand{\gcal}{\mathcal{G}}
\newcommand{\Sp}{\mathcal{S}}
\newcommand{\dsmash}{\wedge^L}
\newcommand{\smashprod}{\wedge}
\newcommand{\co}{\colon \!}
\newcommand{\Pu}{\mathcal{P}}
\newcommand{\fibrep}{\hat{f}}

\newtheorem{theorem}{Theorem}[section]
\newtheorem{proposition}[theorem]{Proposition}
\newtheorem{corollary}[theorem]{Corollary}
\newtheorem{lemma}[theorem]{Lemma}

\newtheorem{definition}[theorem]{Definition}

\newtheorem{rmk}[theorem]{Remark}

\newtheorem*{thm}{Theorem}

\begin{document}
%\linenumbers

\begin{abstract}
\noindent One of the most useful methods for studying the 
stable homotopy category is localising at some spectrum $E$. 
For an arbitrary stable model category we introduce a candidate 
for the $E$-localisation of this model category. 
We study the properties of this new construction 
and relate it to some well-known categories. 
\end{abstract}
\maketitle

\section*{Introduction}

The stable homotopy category is spectacularly complicated
yet of fundamental importance to homotopy theorists. 
A standard and highly successful method of dealing with this complexity is to 
``filter out'' some of this information via a Bousfield localisation. 
In return we obtain a more structured category with useful and interesting patterns. 

More precisely, we choose some homology theory $E_*$ and replace the stable homotopy category $\Ho( \Sp)$
with $\Ho(L_E \Sp)$, the full subcategory of 
$\Ho( \Sp)$ with objects the $E$-local spectra. 
This means that in the passage from $\Ho(\Sp)$ to $\Ho(L_E\Sp)$, the $E_*$-isomorphisms are formally inverted.
Bousfield's paper \cite{Bou79} is the original source of this idea.

There are a number of other categories that share of the properties
$\Ho( \Sp)$. It would be advantageous if we could generalise
the specialised form of ``homological'' Bousfield localisation to these other contexts, namely arbitrary stable model categories. A comprehensive general study of Bousfield localisation of model categories
is given in \cite{hir03}, but we are specifically interested in the construction of a \textbf{homological localisation} of a stable model category. 

The main motivation comes again from the study of the stable homotopy category. In order to understand spectra, $\Ho(\Sp)$ and its various $E$-localisations it is necessary to relate $\Sp$ and $L_E\Sp$ to other stable model categories $\ccal$. For example, one can study to what extent there is a stable model category $\ccal$ whose homotopy category ``models'' $\Ho(L_E\Sp)$ and how similar $\ccal$ is to $L_E\Sp$ in terms of higher homotopy behaviour. To make those links it would be a desirable tool to have the corresponding $E$-localisations of $\ccal$ in order to compare $E$-local spectra to other counterparts related to $\ccal$.

A stable model category $\ccal$ is a model category 
whose associated homotopy category $\Ho(\ccal)$ 
is triangulated via the construction of
\cite[Section 7]{Hov99}. Lenhardt proved in \cite{Len12}
that $\Ho(\ccal)$ is a module over $\Ho(\Sp)$ whenever
$\ccal$ is a stable model category. Hence we have a tensor product
\[
- \smashprod^L - \co \Ho(\ccal) \times \Ho(\Sp)
\longrightarrow 
\Ho(\ccal)
\]
and an enrichment of $\Ho(\ccal)$ in $\Ho(\Sp)$. This technique is called \textbf{stable frames}.

Using this action on the homotopy category of a stable model category 
one could try to make a new model structure on $\ccal$ such that the 
weak equivalences are the ``$E_*$-isomorphisms'': those maps
$f \co X \to Y$ in $\ccal$ such that 
\[
f \smashprod ^L \id \co X \smashprod^L E \to Y \smashprod^L E 
\]
is an isomorphism in $\Ho(\ccal)$. Such a model structure would 
deserve the name $L_E \ccal$. 
But it seems particularly difficult 
to check that this construction exists for general $\ccal$.
For spectra, the argument appears in \cite[Section VIII.1]{EKMM}
and requires numerous unpleasant cardinality arguments. 

For well-behaved stable model categories $\ccal$
we are going to produce a new model structure $\ccal_E$ that avoids such set-theoretic awkwardness. This $\ccal_E$ is a good replacement for the $E$-localisation of $\ccal$ because of the following universal property: $\ccal_E$ is the ``closest'' model category to $\ccal$ such that any Quillen adjunction from spectra to $\ccal$ 
\[
\Sp \lradjunction \ccal
\]
gives rise to a Quillen adjunction $$L_E\Sp \lradjunction \ccal_E$$ from $E$-local spectra to $\ccal_E$ We are also able to give another description of $\ccal_E$
in terms of pushouts of model categories, which shows 
how strong the universal property of this new model structure is. 

We are also able to give an improvement
of \cite[Theorem 9.5]{BarRoi11b}: we can show that for all $E$, the homotopy information of $E$-local spectra is entirely encoded in the $\Ho(\Sp)$-module structure on the $E$-local stable homotopy category. This was previously only possible with the strong restriction that $E$ is smashing. 
Hence we have the following, which appears as Theorem \ref{thm:elocalrigid}.

\begin{thm}
Let $\ccal$ be a stable model category.
Assume we have an equivalence of triangulated categories
\[
\Phi: \Ho(L_E\Sp) \longrightarrow \Ho(\ccal)
\]
then $L_E\Sp$ and $\ccal$ are Quillen equivalent if and only if $\Phi$ is an equivalence of $\Ho(\Sp)$-model categories. 
\end{thm}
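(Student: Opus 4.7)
The forward implication is formal: a Quillen equivalence of stable model categories is compatible with the stable frames construction up to natural isomorphism, so the derived equivalence automatically upgrades to an equivalence of $\Ho(\Sp)$-module categories, and in particular $\Phi$ (being the underlying triangulated equivalence) becomes a $\Ho(\Sp)$-module equivalence.

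For the reverse implication the strategy is to go through the intermediate model structure $\ccal_E$ constructed in the body of the paper. The plan is: first, choose a cofibrant-fibrant object $X \in \ccal$ representing $\Phi(L_E \mathbb{S})$. By stable frames, the assignment $A \mapsto X \smashprod A$ extends to a Quillen adjunction
\[
F \co \Sp \lradjunction \ccal.
\]
By the universal property of $\ccal_E$ established in the paper, $F$ descends to a Quillen adjunction $\tilde F \co L_E \Sp \lradjunction \ccal_E$. The total left derived functor $L\tilde F$ sends $L_E \mathbb{S}$ to $X \cong \Phi(L_E \mathbb{S})$, commutes with $\Ho(\Sp)$-action on generators, and thus agrees with $\Phi$ on the subcategory generated by $L_E \mathbb{S}$ under the $\Ho(\Sp)$-action, shifts, and triangles. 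Since $L_E \mathbb{S}$ generates $\Ho(L_E \Sp)$ as a triangulated $\Ho(\Sp)$-module, $\Phi$ being a module equivalence then forces $L\tilde F$ to be an equivalence, so $\tilde F$ is a Quillen equivalence.

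The remaining step, which is the main obstacle, is to show that the canonical Quillen adjunction $\ccal \lradjunction \ccal_E$ is itself a Quillen equivalence; composing these two then yields the desired equivalence $L_E \Sp \lradjunction \ccal$. The hypothesis that $\Phi$ is a $\Ho(\Sp)$-module equivalence forces the action of $\Ho(\Sp)$ on $\Ho(\ccal)$ to factor through $\Ho(L_E \Sp)$: if $A \in \Ho(\Sp)$ is $E$-acyclic and $Y \in \ccal$, then $Y \smashprod^L A \cong \Phi(\Phi^{-1}(Y) \smashprod^L A) = 0$ in $\Ho(\ccal)$. This derived vanishing says exactly that every object of $\ccal$ is already $E$-local in the sense encoded by $\ccal_E$. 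The nontrivial work is converting this into the statement that $\ccal \to \ccal_E$ reflects and preserves weak equivalences; this uses the explicit description of the weak equivalences of $\ccal_E$ (equivalently, the pushout description of $\ccal_E$ mentioned in the introduction), together with Brown representability and the triangulated structure on $\Ho(\ccal)$ to extend the statement from generators to all objects.
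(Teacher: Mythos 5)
Your forward implication is fine, and your observation that the module equivalence forces $X \smashprod^L A = 0$ in $\Ho(\ccal)$ for every $X$ and every $E$-acyclic spectrum $A$ is exactly the right key fact. But your reverse implication has a genuine gap, in two places. First, the theorem assumes only that $\ccal$ is a stable model category, while your route runs through $\ccal_E$, whose construction in Theorem \ref{thm:Eexistence} needs $\ccal$ to be proper and cellular with cofibrant domains of generating cofibrations; none of this is available here, so $\ccal_E$ need not exist and the detour is illegitimate at the stated level of generality. Second, even granting those extra hypotheses, the step you yourself call ``the nontrivial work'' --- that $\ccal \lradjunction \ccal_E$ is a Quillen equivalence, i.e.\ that $\ccal$ is already stably $E$-familiar --- is only gestured at, and the tool you invoke, Brown representability together with a generators-to-all-objects induction, is unavailable in exactly the situations this theorem is designed to cover: for some $E$ the category $\Ho(\ccal) \simeq \Ho(L_E\Sp)$ has no compact objects whatsoever \cite[Corollary B.13]{HovStr99}, which is precisely why the compact-generator argument of \cite[Theorem 9.5]{BarRoi11b} had to be replaced by the present theorem.

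The repair is that your key observation already finishes the proof with no mention of $\ccal_E$. The framing adjunction gives $[X \smashprod^L A, Y]_{\Ho(\ccal)} \cong [A, \mathbb{R}\Map_\ccal(X,Y)]_{\Ho(\Sp)}$, so the vanishing $X \smashprod^L A = 0$ for all $E$-acyclic $A$ says that every homotopy mapping spectrum $\mathbb{R}\Map_\ccal(X,Y)$ is $E$-local; by \cite[Theorem 7.8]{BarRoi11b} this means $\ccal$ itself is stably $E$-familiar (the paper reaches the same conclusion by transporting mapping spectra along $\Phi^{-1}$). Stable $E$-familiarity says directly that the stable frame on a fibrant-cofibrant replacement $X$ of $\Phi(L_E S^0)$ is a left Quillen functor $X \smashprod - \co L_E\Sp \to \ccal$, with no factorisation through any auxiliary model structure. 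Finally, your generation argument for identifying the derived functor with $\Phi$ can be replaced by the one-line module-naturality computation $X \dsmash (-) = \Phi(L_E S^0) \dsmash (-) \cong \Phi(L_E S^0 \dsmash -) \cong \Phi(-)$, so the derived functor of $X \smashprod -$ is an equivalence and hence $X \smashprod -$ is a Quillen equivalence. This is the paper's proof: short, and valid for an arbitrary stable $\ccal$.
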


\bigskip
\subsection*{Organisation}

Firstly, we recall some definitions and conventions regarding Bousfield localisation and stable frames. We also re-introduce the concept of \textbf{stably $E$-familiar model categories}: in \cite{BarRoi11b} we studied those $\ccal$ such that the action of $\Ho(\Sp)$ factors over the functor
$\Ho(\Sp) \to \Ho(L_E \Sp)$. In particular the homotopy category of such a model category has an enrichment in the more structured category $\Ho(L_E \Sp)$. We called such categories stably $E$-familiar.

We then turn to the question of altering a model structure on a given category
so as to obtain a stably $E$-familiar model category.  
In Section \ref{sec:spectralfamiliarisation} 
we consider the simpler case of spectral model categories: 
such a model category is defined in a similar way to a simplicial model category,
but with simplicial sets replaced by the model category of symmetric spectra.
We construct the \textbf{stable $E$-familiarisation} of a spectral model category 
in this section. 

In Section \ref{sec:stablefamiliarisation} we extend our results to more general stable model categories. We prove that the \textbf{stable $E$-familiarisation} 
of a model category $\ccal$ is the closest stably $E$-familiar
model category to $\ccal$ in the following sense. 
The result below also implies that our construction has the universal 
property we described earlier.

\begin{thm}
Let $\ccal$ be a stable, proper and cellular model category such that the domains of the generating cofibrations of $\ccal$ are cofibrant. 
Then there is a model structure $\ccal_E$ on $\ccal$ such that 
\begin{enumerate}
\item $\ccal_E$ is stably $E$-familiar, 
%\item the weak equivalences of $\ccal_E$ are the 
%$T'$-equivalences, for $T'$ the class below  
%\[
%T'=\{ X \smashprod^L f \,\,|\,\, X \in \ccal, \, \, f \mbox{ is an $E$-equivalence of spectra}\}
%\]
\item if $F \co \ccal \to \ecal$ is a left Quillen functor and $\ecal$ is stably $E$-familiar, then $F$ factors over $\ccal \to \ccal_E$.
\end{enumerate}
\end{thm}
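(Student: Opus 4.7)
The plan is to construct $\ccal_E$ as the left Bousfield localisation of $\ccal$ at a carefully chosen set $T$ of maps; the hypotheses that $\ccal$ is left proper and cellular are precisely what is needed to apply Hirschhorn's machinery from \cite{hir03} to produce the localised model structure.

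To identify $T$, recall that $L_E \Sp$ arises as the left Bousfield localisation of $\Sp$ at some set $S_E$ of maps between cofibrant spectra. Each domain $K$ of a generating cofibration of $\ccal$ is cofibrant by hypothesis, so by \cite{Len12} the stable frame construction yields a left Quillen functor $K \smashprod - \co \Sp \to \ccal$. Define
\[
T = \{\, K \smashprod s \mid K \text{ is the domain of a generating cofibration of } \ccal, \ s \in S_E \,\},
\]
and set $\ccal_E := L_T \ccal$.

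For (1), I must show that for every $X \in \ccal$ and every $s \in S_E$ the map $X \dsmash s$ is a weak equivalence in $\ccal_E$; this says precisely that the derived $\Ho(\Sp)$-action on $\Ho(\ccal_E)$ descends to $\Ho(L_E \Sp)$. Mimicking the spectral case of Section \ref{sec:spectralfamiliarisation}, any cofibrant $X$ is built from the domains and codomains of the generating cofibrations of $\ccal$ by pushouts and transfinite composition. Since the derived smash product commutes with these constructions in the $\ccal$-variable, the claim reduces to the generating cells, where $K \smashprod s \in T$ is a weak equivalence by construction.

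For (2), Hirschhorn's universal property of left Bousfield localisation reduces the claim to verifying that $F$ sends every map in $T$ to a weak equivalence in $\ecal$. Given $K \smashprod s \in T$, compatibility of stable frames with left Quillen functors gives a natural weak equivalence $F(K \smashprod s) \simeq F(K) \smashprod s$ in $\ecal$. Since $\ecal$ is stably $E$-familiar, the derived action of any $s \in S_E$ is a weak equivalence on every object of $\ecal$, so $F(K) \smashprod s$ is a weak equivalence and the factorisation follows.

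The main obstacle will be establishing the identity $F(K \smashprod s) \simeq F(K) \smashprod s$ for a general left Quillen functor $F$. Stable frames are constructed via cosimplicial frames followed by a spectrification process, and one must verify that $F$ intertwines both constructions up to natural weak equivalence. This compatibility, which will rely on $F$ preserving the homotopy colimits used to build framings of cofibrant objects, is the technical heart of the argument.
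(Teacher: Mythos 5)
Your approach is genuinely different from the paper's, and it has a real gap at the key step of part (1). In a general stable model category the action of $\Sp$ on $\ccal$ coming from stable frames is \emph{not} a point-set bifunctor that is colimit-preserving in the $\ccal$-variable: for each fibrant-cofibrant $X$ one chooses a stable frame and obtains a left Quillen functor $X \smashprod - \co \Sp \to \ccal$, but the assignment $X \mapsto X \smashprod -$ is functorial only up to homotopy. So the sentence ``the derived smash product commutes with these constructions in the $\ccal$-variable'' has no point-set meaning, and at the level of $\Ho(\ccal)$ your cellular induction requires (i) interpreting transfinite cell attachments as homotopy pushouts and telescopes, and (ii) knowing that $\Ho(\ccal)$ is generated \emph{as a localising subcategory} by the domains and codomains of the generating cofibrations. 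For a cellular (as opposed to combinatorial) stable model category this generation statement is not available: one only knows that the cofibres of the generating cofibrations are \emph{weak} generators, and weak generation does not support the induction unless the generators are compact --- which can fail badly here; the paper itself cites \cite[Corollary B.13]{HovStr99} for $E$-local categories with no nonzero compact objects at all. (Smaller issues: your set $T$ contains only the domains $K$, not the codomains, and cofibrant objects are merely retracts of cell complexes.) This is exactly the difficulty the authors flag in the introduction when they say it ``seems particularly difficult'' to construct the naive localisation for general $\ccal$.

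The paper avoids this induction entirely. It first treats \emph{spectral} model categories $\dcal$, where $\smashprod$ is a genuine Quillen bifunctor: there it localises at the pushout-product set $S = I_\dcal \square J_E$, and the verification that $L_S\dcal$ is an $L_E\Sp$-model category (hence stably $E$-familiar) reduces via \cite[Lemma 4.2.4]{Hov99} to generating maps, where $i \square j$ lies in $S$ and so is an $S$-equivalence by fiat (Proposition \ref{prop:espectral}); Proposition \ref{prop:classofequivalences} then identifies the $S$-equivalences with the smash-type class you are aiming for. For general $\ccal$ it passes to the Quillen equivalent spectral category $\dcal = \Sp^\Sigma(s\ccal)$ of \cite[Theorem 7.2]{BarRoi12}, defines $\ccal_E = L_{\Omega^\infty \fibrep S}\ccal$, and transports everything back using \cite[Theorem 3.3.20]{hir03} and \cite[Lemma 7.10]{BarRoi12}. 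Finally, note that what you call the ``technical heart'' of part (2) --- the identity $\mathbb{L}F(K \smashprod^L s) \simeq (\mathbb{L}F K) \smashprod^L s$ for a left Quillen $F$ --- is not actually an obstacle: it is Lenhardt's theorem \cite[Theorem 6.3]{Len12} that a Quillen adjunction induces a morphism of $\Ho(\Sp)$-module structures, which the paper invokes for precisely this purpose. So your part (2) is essentially sound modulo that citation; the genuine gap is in part (1), and it is the gap that the paper's detour through $\Sp^\Sigma(s\ccal)$ exists to close.
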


Section \ref{sec:examples} consists of several examples of $\ccal_E$ for some $E$ and $\ccal$ involving algebraic model categories, chromatic localisations and module categories over a ringoid spectrum. 

In Section \ref{sec:pushouts} we rephrase the universal property of $\ccal_E$ in terms of homotopy pushouts of model categories. 

Finally, we prove a full version of the \textbf{modular rigidity} theorem that all homotopy information of $E$-local spectra is governed by the $\Ho(\Sp)$-action on $\Ho(L_E\Sp)$ given by framings. 

%and 
%is `closer' to $\ccal$ in the sense that if 
%this $L_E \ccal$ exists, 
%the functor $\Ho(\ccal) \to \Ho(L_E \ccal)$
%would factor over $\Ho(\ccal) \to \Ho(\ccal_E)$. 
%Indeed, when localisation at $E$ is smashing, 
%we can show that $\ccal_{L_E S^0}$ would equal 
%$L_{L_E S^0} \ccal$ and hence the model structure
%$L_{L_E S^0} \ccal$ exists.

\section{Bousfield localisation}\label{sec:localisation}

We begin with an introduction to Bousfield localisation
at a homology theory $E$. Throughout the paper
when we refer to spectra, we mean symmetric spectra \cite{hss00} unless stated otherwise.

Let $E$ be a spectrum. Then $E$ corepresents a homology functor $E_*$ on the category of spectra via $$E_*(X) = \pi_*(E \smashprod X).$$ Bousfield used this to construct a homotopy category of spectra where maps which induce isomorphisms on $E_*$--homology are isomorphisms \cite{Bou79}. We recap some of the definitions from this work.  We denote homotopy classes of maps of spectra by $[-,-]$.

\begin{definition}
A map $f \co X \to Y$ of spectra is an \textbf{$E$--equivalence} if $E_*(f)$ is an isomorphism.
A spectrum $Z$ is \textbf{$E$--local} if $f^* \co [Y,Z] \to [X,Z]$ 
is an isomorphism for
all $E$--equivalences $f \co X \to Y$. A spectrum $A$ is \textbf{$E$--acyclic} if
$[A,Z]=0$ for all $E$--acyclic $Z$.
An $E$--equivalence from $X$ to an $E$--local object $Z$ is called an \textbf{$E$--localisation}.
\end{definition}

Bousfield localisation of spectra gives a homotopy theory that is particularly sensitive towards $E_*$ and $E$--local phenomena. The $E$--local homotopy theory is obtained from the category of spectra by formally inverting the $E$--equivalences.

This can be seen as a special case of a more general result by Hirschhorn.
For $X, Y \in \ccal$, we let $\map_\ccal(X,Y) \in \sset$ denote the homotopy function object, see \cite[Chapter 17]{hir03} and Section 
\ref{sec:framings}.

\begin{definition}\label{def:local}
Let $S$ be a set of maps in $\ccal$. Then an object $Z \in \ccal$ is \textbf{$S$-local} if
\[
\map_\ccal(s,Z): \map_\ccal(B,Z) \longrightarrow \map_\ccal(A,Z)
\]
is a weak equivalence in simplicial sets for any
$s: A \longrightarrow B$ in $S$.
A map $f:X \longrightarrow Y \in \ccal$ is an \textbf{$S$-equivalence} if
\[
\map_\ccal(f,Z): \map_\ccal(Y,Z) \longrightarrow \map_\ccal(X,Z)
\]
is a weak equivalence for any $S$-local $Z \in \ccal$.
An object $W \in \ccal$ is \textbf{$S$-acyclic} if
$$
\map_\ccal(W,Z) \simeq *
$$
for all $S$-local $Z \in \ccal$. 
\end{definition}

A \textbf{left Bousfield localisation} of a model category $\ccal$ with respect to a class of maps $S$ is a new model structure $L_S\ccal$ on $\ccal$ such that
\begin{itemize}
\item the weak equivalences of $L_S\ccal$ are the $S$-equivalences, 
\item the cofibrations of $L_S \ccal$ are the cofibrations of $\ccal$,
\item the fibrations of $L_S\ccal$ are those maps that have the right lifting property with respect to cofibrations that are also $S$-equivalences. 
\end{itemize}

Hirschhorn proves that if $S$ is a set and $\ccal$ is left proper and cellular
then $L_S\ccal$ exists. (We will give rough definitions of these two terms below.) 
In the case of localising spectra at a homology theory one wants to invert 
the class of $E_*$-isomorphisms. Since this is not a set, 
one cannot use Hirschhorn's result directly. 
In \cite[Section VIII.1]{EKMM} it is shown that there is a set $S$ whose 
$S$-equivalences are exactly the $E_*$-isomorphisms. 
Hence, the key to proving the existence of homological localisations is to 
find a set giving the correct notion of equivalence. We shall encounter this
idea again when constructing $\ccal_E$.

A model category is \textbf{left proper} if
the pushout of a weak equivalence along a cofibration is 
a weak equivalence. A model category is \textbf{right proper} if 
the pullback of a weak equivalence along a fibration is a 
weak equivalence. If a model category is both left and right proper, 
we say that it is \textbf{proper}. 

We also a need a stronger version of cofibrantly generated, 
which forces cell complexes to be better behaved. 
The actual definition is technical and not particularly illuminating, 
so we shall simply say that a model category is \textbf{cellular} 
if it is cofibrantly generated by sets $I$ and $J$, 
and the domains and codomains of $I$ and $J$ satisfy some nice
cardinality conditions. We leave the details to 
\cite[Definition 12.1.1]{hir03}.

\section{Stable framings}\label{sec:framings}

Framings are a powerful tool that describe and classify Quillen functors from simplicial sets or spectra to arbitrary model categories. They were first developed by Hovey in \cite[Section 5.2]{Hov99}. For a model category $\ccal$, he investiagtes cosimplicial and simplicial resolutions of objects in $\ccal$. These are called ``frames''. In more detail, a frame of an object $A \in \ccal$ is a cofibrant replacement of the constant cosimplicial object $A \in \ccal^\Delta$ in the Reedy model category of cosimplicial objects in $\ccal$. From these notions one obtains bifunctors
\[
\begin{array}{r@{\  : \ }ll}
- \otimes - & \ccal \times \sset \longrightarrow \ccal,  \\
\map_l(-,-) & \ccal^{op} \times \ccal \longrightarrow \sset,   \\
(-)^{(-)} &  \ccal \times \sset^{op} \longrightarrow \ccal,  \\
\map_r(-,-) & \ccal^{op} \times \ccal \longrightarrow \sset
\end{array}
\]
satisfying certain adjunction properties. The notation $\otimes$ stems from the fact that $A \otimes \mathbb{S}^0 \simeq A.$ However, this set-up does not equip $\ccal$ with the structure of a simplicial model category because the ``mapping spaces'' $\map_l(X,Y)$ and $\map_r(X,Y)$ only agree up to a zig-zag of weak equivalences for $X, Y \in \ccal$ \cite[Proposition 5.4.7]{Hov99}. But their derived functors agree, leaving us with the following \cite[Theorem 5.5.3]{Hov99}.

\begin{theorem}[Hovey]
Let $\ccal$ be any model category. Then its homotopy category $\Ho(\ccal)$ is a closed $\Ho(\sset)$-module category.
\end{theorem}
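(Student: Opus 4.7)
The plan is to promote the bifunctors $-\otimes-$, $(-)^{(-)}$, $\map_l$ and $\map_r$ obtained from framings to well-defined derived bifunctors on the homotopy categories, and then to verify the unit, associativity and closedness axioms at that derived level, where the slack in the frame-level constructions has been absorbed.

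First I would fix a functorial cosimplicial frame $A \mapsto A^\bullet$ on $\ccal$, which exists by Reedy-cofibrant replacement in $\ccal^\Delta$. For each cofibrant $A$ the functor
\[
A \otimes - \co \sset \longrightarrow \ccal, \qquad K \longmapsto A^\bullet \otimes_\Delta K,
\]
is a left Quillen functor with right adjoint $\map_l(A,-)$; this is essentially the content of the framing construction. The next step is a Ken Brown style argument: for each simplicial set $K$, the functor $-\otimes K$ preserves weak equivalences between cofibrant objects, since one may compare two frames of weakly equivalent cofibrant objects by a Reedy trivial cofibration. Combined with cofibrant replacement, this produces a total left derived bifunctor $-\otimes^L - \co \Ho(\ccal) \times \Ho(\sset) \to \Ho(\ccal)$ together with a derived right adjoint, which will provide the closed structure.

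Next I would verify the module axioms. The unit isomorphism $A \otimes^L \Delta^0 \cong A$ in $\Ho(\ccal)$ is immediate from the definition of a cosimplicial frame. For associativity I need a natural weak equivalence $(A \otimes K) \otimes L \simeq A \otimes (K \times L)$ with $A$ cofibrant and $K,L \in \sset$. Both sides are computed as coends of $A^\bullet$ against simplicial sets, so one reduces by the usual skeletal/coend formalism to the case $K = \Delta^n$, $L = \Delta^m$, where both sides compute the same homotopy colimit over $\Delta^n \times \Delta^m$; naturality in $K$ and $L$ then extends the equivalence by colimits. One then passes to $\Ho(\ccal)$ and checks the pentagon and triangle identities, both of which commute because the relevant comparison maps are induced from canonical simplicial identities that already commute at the level of indexing diagrams.

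The closedness of the module structure is then formal: the Quillen adjunction $(A \otimes -, \map_l(A,-))$ for cofibrant $A$, together with the analogous adjunction $(- \otimes K, (-)^K)$ for each $K$, descends to a two-variable adjunction on homotopy categories after replacing $\map_l$ by its derived version and using that $\map_l$ and $\map_r$ agree in $\Ho(\sset)$ by \cite[Proposition 5.4.7]{Hov99}. The main technical obstacle is the associativity coherence, because $-\otimes -$ is only associative up to a chain of natural weak equivalences at the point-set level; the key point is that any two functorial choices of cosimplicial frames are connected by a Reedy weak equivalence, so the induced associator is independent of choices in $\Ho(\ccal)$, and the coherence diagrams, which commute strictly at the level of frames on representables, therefore commute in $\Ho(\ccal)$ after derived extension.
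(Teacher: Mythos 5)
The paper itself offers no proof of this statement: it is Hovey's theorem, quoted with a citation to \cite[Theorem 5.5.3]{Hov99}, so your sketch can only be measured against Hovey's own argument. Its overall shape --- derive the framing bifunctors, then check unit, associativity, coherence and closedness --- does follow Hovey, and your treatment of the derived bifunctor (Ken Brown plus Reedy comparison of frames), the unit isomorphism, and the passage from two-variable Quillen adjunctions to the closed structure is fine in outline.

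The gap is in the associativity step, which is the real content of the theorem. You claim that both $(A \otimes K)\otimes L$ and $A \otimes (K \times L)$ are ``computed as coends of $A^\bullet$ against simplicial sets''. That is true of the right-hand side but false, as written, of the left-hand side: $A \otimes K$ is merely an object of $\ccal$, and forming $(A\otimes K)\otimes L$ requires choosing a cosimplicial frame on it; if that frame is the functorial Reedy cofibrant replacement you fixed at the start, it bears no computable relation to $A^\bullet$. The heart of Hovey's proof is precisely the lemma that repairs this: for cofibrant $A$, the prolongation $n \mapsto A^\bullet \otimes (K \times \Delta^n)$ is again a cosimplicial frame on $A \otimes K$ (Reedy cofibrancy because its latching maps have the form $A^\bullet \otimes (K\times\partial\Delta^n \to K\times\Delta^n)$ and $A^\bullet\otimes-$ is left Quillen; homotopical constancy by Ken Brown applied to the projections $K\times\Delta^n \to K$). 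Since any two frames on the same object are linked by a Reedy equivalence, one may legitimately compute the outer tensor with this particular frame, and associativity then becomes the Fubini isomorphism $(A^\bullet \otimes (K\times \Delta^\bullet)) \otimes L \cong A^\bullet \otimes (K \times L)$. Without this lemma your reduction to representables cannot even start: already $(A \otimes \Delta^n)\otimes \Delta^m$ involves an unidentified frame on $A \otimes \Delta^n \cong A^n$. A second, related problem: extending from representables ``by colimits'' presupposes that $-\otimes^L L$ preserves homotopy colimits in the $\ccal$-variable, which is not formal at this stage, since the point-set functor $B \mapsto B^\bullet \otimes L$ built from functorial frames preserves neither colimits nor cofibrations; such preservation is ordinarily deduced \emph{from} the closed module structure you are trying to establish. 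The same objection undercuts your coherence argument, which rests on the same extension-from-representables device.
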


In particular, this equips any model category with the notion of a homotopy mapping space.
Moreover, framings satisfy the following important properties.
\begin{itemize}
\item If $\ccal$ carries the structure of a simplicial model category \cite[Definition 4.2.18]{Hov99}, then the two $\Ho(\sset)$-module structures coming from either framings or the simplicial structure agree \cite[Theorem 5.6.2]{Hov99}.
\item If $F: \sset \longrightarrow \ccal$ is a left Quillen functor with $F(\mathbb{S}^0)=A$, then the left derived functors of $F$ and of  the framing functor $A \otimes -: \sset \longrightarrow \ccal$ agree. Thus, every left Quillen functor from simplicial sets to any model category can be described, up to homotopy, by a frame. 
\end{itemize}

The second property follows from the fact that the category of cosimplicial objects $\ccal^\Delta$ is equivalent to the category of adjunctions $\sset \lradjunction \ccal$. A cosimplicial object $A^\bullet$ corresponds to a Quillen adjunction under this equivalence if and only if it is a frame, that is  $A^\bullet$ is cofibrant and homotopically constant, \cite[Proposition 3.2]{BarRoi11b}.

\bigskip
In \cite{Len12} Fabian Lenhardt described an analogous set-up for spectra and stable model categories. Now let $\ccal$ be a stable model category. First, Lenhardt shows that the category of adjunctions between spectra and a stable model category $\ccal$ is equivalent to the category of ``$\Sigma$-cospectra'' $\ccal^\Delta(\Sigma)$. An object in $\ccal^\Delta(\Sigma)$ consists of a sequence of cosimplicial objects $X_n \in \ccal^\Delta$ together with structure maps $$\Sigma X_{n+1} \longrightarrow X_n.$$ He then characterises those $\Sigma$-cospectra that give rise to Quillen adjunctions under this equivalence, calling them stable frames. These give rise to bifunctors $ - \wedge -$ and $\Map(-,-)$ satisfying the expected adjunction properties.

As in the unstable case, this is not rigid enough to equip any stable model category $\ccal$ with the structure of a spectral model category. However, the above bifunctors give rise to the following \cite[Theorem 6.3]{Len12}.

\begin{theorem}[Lenhardt]
Let $\ccal$ be a stable model category. Then $\Ho(\ccal)$ is a closed $\Ho(\Sp)$-module category.
\end{theorem}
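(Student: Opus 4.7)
The plan is to mirror Hovey's proof of the unstable statement (that $\Ho(\ccal)$ is closed $\Ho(\sset)$-module for any model category) but replace cosimplicial frames by the stable frames just described. Concretely, I would check that the bifunctors $-\wedge-$ and $\Map(-,-)$ arising from stable frames are homotopically well-defined, derive them, and then verify the closed module axioms (unit, associativity, coherence) on the homotopy-category level.

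First, for each cofibrant $A \in \ccal$ I would fix a stable frame $A^\bullet \in \ccal^\Delta(\Sigma)$; by Lenhardt's characterisation this yields a Quillen adjunction $(A\wedge-, \Map(A,-)) \co \Sp \lradjunction \ccal$. Assembling these choices gives the required bifunctors on the model-category level. The next step is to check that $-\wedge-$ is homotopically invariant in both variables on cofibrant/fibrant input: invariance in the $\Sp$-slot is immediate because $A\wedge-$ is left Quillen for each fixed cofibrant $A$; invariance in the $\ccal$-slot uses that a weak equivalence $A\to A'$ between cofibrants lifts to a weak equivalence of stable frames $A^\bullet\to A'^\bullet$ in the Reedy structure on $\ccal^\Delta(\Sigma)$, together with Ken Brown's lemma applied levelwise. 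This produces a derived bifunctor $-\wedge^L- \co \Ho(\ccal)\times\Ho(\Sp) \to \Ho(\ccal)$, independent of the chosen frame, with right adjoint $R\Map(-,-)$.

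To get the module structure I would then construct the coherence isomorphisms. The unit isomorphism $X\wedge^L \mathbb{S} \cong X$ comes from the defining property of a frame ($A^\bullet$ is homotopically constant and $A^0 \simeq A$, so the associated left Quillen functor sends the sphere spectrum to something weakly equivalent to $A$). For associativity $(X\wedge^L E)\wedge^L F \cong X\wedge^L (E\wedge^L F)$ I would exploit the equivalence between Quillen adjunctions $\Sp\lradjunction\ccal$ and stable frames: given a frame for $X$ and the smash product on spectra, both sides of the desired isomorphism are computed by composing Quillen adjunctions $\Sp \to \Sp \to \ccal$ whose composite corresponds, via Lenhardt's equivalence, to a stable frame on $X$ obtained by ``smashing in'' $E$ or $F$. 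Showing the pentagon and triangle diagrams commute on $\Ho(\ccal)$ then reduces to checking they commute up to a zig-zag of weak equivalences between the corresponding cospectra, which follows from uniqueness of stable frames up to Reedy weak equivalence.

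The main obstacle is the associativity/coherence verification: the bifunctor $-\wedge-$ is only defined after choosing a frame for the first variable, so computing $(X\wedge E)\wedge F$ requires comparing two a priori different frames on objects like $X\wedge E$, and one must show the resulting adjunctions agree up to a Reedy weak equivalence of $\Sigma$-cospectra in a coherent way. All the other axioms (naturality, adjointness, compatibility with the closed symmetric monoidal structure on $\Ho(\Sp)$) are then formal consequences of the adjunction properties satisfied by stable frames together with Hovey's analogous unstable coherence result, which sits inside this one at the level $n=0$ of the cospectrum.
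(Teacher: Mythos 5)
Your proposal cannot be compared against a proof in the paper, because the paper gives none: this theorem is quoted from \cite[Theorem 6.3]{Len12}, and the surrounding text only summarises the machinery (the equivalence between adjunctions $\Sp \lradjunction \ccal$ and $\Sigma$-cospectra, and the characterisation of stable frames). Measured against Lenhardt's actual argument, your broad strategy is the right one: existence and uniqueness of stable frames, a derived action $X \smashprod^L -$, and associativity reduced to the fact that a left Quillen functor out of $\Sp$ into a stable model category is determined up to weak equivalence by its value on the sphere spectrum.

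There are, however, two genuine gaps. First, the sentence ``assembling these choices gives the required bifunctors on the model-category level'' assumes away the central difficulty. Stable frames are constructed object by object (and only for fibrant-and-cofibrant objects, not merely cofibrant ones, since the inductive construction of the structure maps $\Sigma X_{n+1} \to X_n$ needs both), and there is no known way to make these choices functorial in the $\ccal$-variable; the action exists only as a bifunctor on homotopy categories. Your Reedy-lifting/Ken Brown argument gives invariance under weak equivalences, but functoriality of $- \smashprod^L E$ for \emph{arbitrary} morphisms of $\Ho(\ccal)$, compatibility with composition, and the coherence of the resulting homotopy-level bifunctor are exactly the technical content of Lenhardt's proof and are not supplied by your sketch. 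Second, the claim that adjointness and closedness are ``formal consequences'' is precisely backwards: as the paper itself remarks immediately after the theorem, technical subtleties restrict the conclusion to a $\Ho(\Sp)$-module structure rather than a closed module structure, so the word ``closed'' in the displayed statement is stronger than what \cite{Len12} establishes. A correct write-up would either drop closedness or confront head-on the problem of producing the adjoint bifunctor $\mathbb{R}\Map(-,-)$ with all the required naturality, which does not follow formally from having a Quillen adjunction for each fixed $X$.
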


Note that unlike in the stable case, certain technical subtleties only allow the construction of a $\Ho(\Sp)$-module structure on $\Ho(\ccal)$ rather than a closed module structure.

As expected, this satisfies the following key properties.
\begin{itemize}
\item If $\ccal$ is already a spectral model category, then the $\Ho(\Sp)$-module structure derived from the spectral structure agrees with the $\Ho(\Sp)$-module structure coming from stable frames \cite[Example 6.7]{BarRoi11b}.
\item By construction, every left Quillen functor $F: \Sp \longrightarrow \ccal$ is, up to homotopy, of the form $X \smashprod -: \Sp \longrightarrow \ccal$ for some fibrant-cofibrant $X \in \ccal$.
\item In particular, for any fibrant-cofibrant $X \in \ccal$ there is a left Quillen functor $\Sp \longrightarrow \ccal$ that sends the sphere spectrum to $X$. 
\item Any stable frame and thus any Quillen functor $\Sp \longrightarrow \ccal$ is, up to homotopy, entirely determined by its image on the sphere.
\end{itemize}

As we have already mentioned, the homotopy theory of $L_E \Sp$ is often much better understood than $\Sp$. So it is worth asking if some stable model categories 
have more in common with $L_E \Sp$ than $\Sp$. 
We answer this question and obtain several useful results using this idea in 
\cite{BarRoi11b}. We give the fundamental definitions below.

\begin{definition}
We say that a stable frame $X \in \ccal^\Delta(\Sigma)$ is an \textbf{$E$-local frame} 
if it gives rise to a Quillen functor pair 
\[
X \wedge - : L_E\Sp \lradjunction \ccal: \Map(X,-)
\]
A stable model category $\ccal$ is \textbf{stably $E$-familiar} if every stable frame 
is an $E$-local frame.
\end{definition}

This is \cite[Definition 7.1]{BarRoi11b}. This generalises the notion of an $L_E\Sp$-model category in the following sense: if $\ccal$ is already an $L_E\Sp$-model category, then the $\Ho(L_E\Sp)$-module structure on $\Ho(\ccal)$ agrees with the $\Ho(L_E\Sp)$-module structure given by $E$-local frames \cite[Proposition 7.6]{BarRoi11b}. We can further characterise stably $E$-familiar model categories as follows \cite[Theorem 7.8]{BarRoi11b}.

\begin{theorem}
Let $\ccal$ be a stable model category. Then $\ccal$ is stably $E$-familiar if and only if the homotopy mapping spectrum $\mathbb{R} \Map_\ccal(X,Y)$ is an $E$-local spectrum for all $X, Y \in \ccal$.
\end{theorem}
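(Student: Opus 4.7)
The plan is to reduce both directions of the theorem to Hirschhorn's lifting criterion for Bousfield localisations (\cite[Theorem 3.3.20]{hir03}): a Quillen pair $F \co \mcal \lradjunction \ncal : G$ descends to a Quillen pair $F \co L_S \mcal \lradjunction \ncal : G$ if and only if $G$ sends fibrant objects of $\ncal$ to $S$-local objects of $\mcal$. This criterion applies to $L_E \Sp$ via the set $S$ of morphisms of spectra constructed in \cite[VIII.1]{EKMM}, for which $S$-equivalences coincide with $E$-equivalences. Specialising to $\mcal = \Sp$, a Quillen adjunction $F \co \Sp \lradjunction \ccal : G$ descends to an adjunction $L_E\Sp \lradjunction \ccal$ precisely when $G$ takes fibrant objects of $\ccal$ to $E$-local spectra.

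The rest of the argument is bookkeeping via stable frames. For fibrant-cofibrant $X \in \ccal$, Lenhardt's construction gives a Quillen pair $X \smashprod - \co \Sp \lradjunction \ccal : \Map(X,-)$, and for fibrant $Y$ the spectrum $\Map(X, Y)$ is a model for $\mathbb{R}\Map_\ccal(X, Y)$. For the ``only if'' direction, assume $\ccal$ is stably $E$-familiar and fix $X, Y \in \ccal$. Replacing $X$ by a fibrant-cofibrant model and $Y$ by a fibrant model, the stable frame on $X$ is $E$-local by hypothesis, hence the criterion above forces $\Map(X, Y) \simeq \mathbb{R}\Map_\ccal(X, Y)$ to be $E$-local. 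Conversely, assuming every $\mathbb{R}\Map_\ccal(X, Y)$ is $E$-local, any stable frame arises from a fibrant-cofibrant $X \in \ccal$ with right adjoint $\Map(X,-)$; for fibrant $Y$, $\Map(X, Y) \simeq \mathbb{R}\Map_\ccal(X, Y)$ is $E$-local by hypothesis, and the criterion promotes the stable frame to an $E$-local frame. So $\ccal$ is stably $E$-familiar.

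The main difficulty is conceptual rather than technical: one must identify precisely which form of Hirschhorn's criterion to apply and check that $L_E \Sp$ is accessible to it despite not visibly being the localisation at a set of maps, a gap filled by \cite[VIII.1]{EKMM}. The identification $\Map(X, Y) \simeq \mathbb{R}\Map_\ccal(X, Y)$ for fibrant-cofibrant $X$ and fibrant $Y$ is intrinsic to Lenhardt's stable frame formalism, so once the criterion and the EKMM set are in hand, the argument is essentially a symmetric application of the criterion on the two directions of the biconditional.
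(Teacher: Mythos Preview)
The paper does not actually prove this theorem; it is quoted from \cite[Theorem 7.8]{BarRoi11b} as background in Section~\ref{sec:framings}. So there is no proof in the present paper to compare against. That said, your argument is essentially the natural one and is correct in outline, with two points worth tightening.

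First, the reference to \cite[Theorem 3.3.20]{hir03} is wrong: in this paper that theorem is used (twice) for the statement that a Quillen \emph{equivalence} induces a Quillen equivalence after localising at $S$ on one side and $\mathbb{L}F(S)$ on the other. The descent criterion you want---that $(F,G)\co \mcal \lradjunction \ncal$ passes to $L_S\mcal \lradjunction \ncal$ iff $\mathbb{L}F$ inverts $S$, equivalently iff $G$ sends fibrant objects to $S$-local objects---is Hirschhorn's Proposition~3.3.18 together with the standard adjunction argument (or, in the form you state it, Proposition~3.1.12). Your observation that \cite[VIII.1]{EKMM} supplies a \emph{set} $S$ whose $S$-equivalences are the $E_*$-isomorphisms is exactly what is needed to make the criterion applicable to $L_E\Sp$; the paper makes the same point in Section~\ref{sec:localisation}.

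Second, your sentence ``any stable frame arises from a fibrant-cofibrant $X \in \ccal$ with right adjoint $\Map(X,-)$'' is slightly loose. A stable frame is an object $\omega \in \ccal^\Delta(\Sigma)$, and the associated right adjoint is $\Map(\omega,-)$, not literally $\Map(X,-)$ for an object $X$ of $\ccal$. What you need, and what Lenhardt's formalism provides, is that for \emph{any} stable frame $\omega$ and fibrant $Y$, the spectrum $\Map(\omega,Y)$ is a model for $\mathbb{R}\Map_\ccal(\omega\smashprod S^0,Y)$; independence of the choice of frame is part of the construction of the $\Ho(\Sp)$-module structure. With that identification in hand, your symmetric application of the descent criterion goes through cleanly in both directions.
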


\bigskip

We can use the theory of $E$-local framings to study algebraic model categories. An algebraic model category is a $\Ch(\mathbb{Z})$-model category in the sense of \cite[Definition 4.2.18]{Hov99}. Thus a $\Ch(\mathbb{Z})$-model category is enriched, tensored and cotensored over chain complexes and satisfies the $\Ch(\mathbb{Z})$-analogue of the compatibility axiom (SM7). This implies that the homomorphism spectra obtained via framings are products of Eilenberg-MacLane spectra \cite[Proposition II.2.20]{GJ99}, \cite[Section 2.6]{DugShi07}. Using the computations of Guti{\'e}rrez in \cite{Gut10} one can draw the following conclusions \cite[Section 9]{BarRoi11b}.
\begin{itemize}
\item For $n \ge 1$ and there are no algebraic stably $K(n)$-familiar model categories, where $n$ denotes the $n^{th}$ Morava-$K$-theory.
\item Let $E(n)$ denote the $n^{th}$ chromatic Johnson-Wilson spectrum. An algebraic model category is stably $E(n)$-familiar if and only if it is rational.
\item The category $L_E\Sp$ of $E$-local spectra is algebraic if and only if $E=H\mathbb{Q}$.
\end{itemize}

Now we turn to the question of whether any model category can be 
made stably $E$-familiar in some natural way.

\section{\texorpdfstring{$E$--}{E-}Familiarisation of spectral model categories}\label{sec:spectralfamiliarisation}

For any homology theory $E$ we can consider the category of $E$-local spectra, $L_E\Sp$. 
Hence we would like to know if a reasonable notion of $E$-localisation exists for an arbitrary stable model category $\ccal$.

Intuitively, a promising definition would be a Bousfield localisation $L_E\ccal$ of $\ccal$ where one localises at the class of ``$E$-equivalences'' given by 
\[
\{f: X \longrightarrow Y \in \ccal \,\,|\,\, f \wedge^L E: X \wedge^L E \longrightarrow Y \wedge^L E \,\,\,\mbox{is a weak equivalence in $\ccal$}\}
\]
where the action $\wedge$ of a spectrum on an element of $\ccal$ is defined via stable frames. However, showing the existence of Bousfield localisations at a class of maps is set-theoretically awkward. The standard method to circumvent this difficulty is to find a set of maps $S$ such that the $S$-equivalences are precisely the $E$-equivalences. This is an extremely difficult task, see 
\cite[Section VIII.1]{EKMM}, so it is not clear if a good notion of $E$-localisation exists for general model categories. 

Instead, we will construct the \emph{stable $E$-familiarisation} of $\ccal$ which is the ``closest'' $E$-familiar model category to $\ccal$. 
We will then draw some conclusions about its properties which will show that this construction is the right choice for an analogue of $E$-localisation for general stable $\ccal$. For example, the first theorem will show that every Quillen adjunction
\[
\Sp \lradjunction \ccal
\]
will give rise to a Quillen adjunction
\[
L_E\Sp \lradjunction \ccal_E.
\]
The first question to answer is: 
what kind of maps to we want to invert in order to construct $\ccal_E$?
In a stably $E$-familiar model category $\dcal$
any map of the form 
\[
\id \smashprod^L j \co X \smashprod^L A \to X \smashprod^L B 
\]
for $j \co A \to B$ an $E$-equivalence of spectra
and $X \in \dcal$ is a weak equivalence. 
Hence we could try to localise $\ccal$ at this
class of maps. So we must find some set of maps 
$S$ such that the $S$-equivalences equals this class. 

We need a couple of technical results first. For this section 
we shall work with $\Sp$-model 
categories in the 
sense of \cite[Definition 4.2.18]{Hov99}, where $\Sp$ again denotes the model category of symmetric spectra.
Such a model category $\dcal$ is enriched, tensored and cotensored over 
symmetric spectra in simplicial sets and satisfies the 
appropriate analogue of Quillen's SM7 axiom for simplicial
model categories. We shall refer to $\dcal$ as being a 
\textbf{spectral model category}. We may also talk about 
$L_E \Sp$-model categories, where we use the 
$E$-local model structure on $\Sp$. 
A spectral model category is in particular stable and simplicial, 
see \cite[Lemma 3.5.2]{ss03stabmodcat}. We will see later that the restriction to spectral model categories is not as big a restriction as it might seem at first.

We denote the pushout-product of two maps by $\square$, so for $f:X \longrightarrow Y$ and $g: A \longrightarrow B$ the pushout-product of $f$ and $g$ is
\[
f \square g: X \smashprod B \coprod_{X \smashprod A} Y \smashprod A \longrightarrow Y \smashprod B.
\]

Recall that a set of maps $S$ in a stable model category $\dcal$
is said to be \textbf{stable} if the class of $S$-local
objects is closed under suspension. 
By \cite[Proposition 3.6]{BarRoi12} if $\dcal$ and $S$ are stable then
so is $L_S \dcal$. 

\begin{proposition}\label{prop:spectral}
Let $\dcal$ be a left proper, cellular and spectral model category 
Let $S$ be a stable set of maps in $\dcal$. 
Then $L_S \dcal$ is also a spectral model category.  
\end{proposition}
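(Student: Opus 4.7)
The plan is to verify Hovey's SM7 axiom for $L_S \dcal$ with respect to the same tensor, cotensor and $\Sp$-enrichment that exhibit $\dcal$ as spectral. Since the underlying category, the bifunctors, and the cofibrations of $L_S \dcal$ are all inherited unchanged from $\dcal$, only the trivial-cofibration part of the pushout-product axiom needs work. The case of $i \co K \to L$ a trivial cofibration of $\Sp$ and $f$ a cofibration of $L_S \dcal$ is immediate: $f \square i$ is already a trivial cofibration of $\dcal$ by SM7 for $\dcal$, hence of $L_S \dcal$. So the content is the symmetric case: for $f \co X \to Y$ a cofibration of $\dcal$ that is also an $S$-equivalence, and $i \co K \to L$ an arbitrary cofibration of $\Sp$ (which we can reduce to the case of cofibrant $K, L$ by passing to generating cofibrations), I want $f \square i$ to be an $S$-equivalence in $L_S \dcal$.

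The strategy is to deduce this from the following key lemma: \emph{if $Z$ is fibrant and $S$-local in $L_S \dcal$ and $K$ is a cofibrant spectrum, then the cotensor $Z^K$ is again fibrant and $S$-local}. Granting the lemma, the spectral adjunction $\map_\dcal(W \smashprod K, Z) \simeq \map_\dcal(W, Z^K)$ on homotopy function complexes (valid whenever $K$ is cofibrant and $Z^K$ is fibrant) shows that $- \smashprod K \co \dcal \to \dcal$ takes $S$-equivalences between cofibrant objects to $S$-equivalences. Equivalently, by the standard criterion for detecting left Quillen functors into a Bousfield localisation, $- \smashprod K$ is a left Quillen endofunctor of $L_S \dcal$, so it sends $f$ to an $S$-equivalence. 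Applied to both $K$ and $L$, together with left properness of $L_S \dcal$ and two-out-of-three via the pushout of the $S$-equivalence $f \smashprod K$ along the cofibration $X \smashprod i$ (where one may assume $X$ cofibrant by a standard cofibrant-replacement reduction on the generating trivial cofibrations of $L_S \dcal$), this forces $f \square i$ to be an $S$-equivalence.

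The main obstacle is the key lemma itself, which I would prove by cellular induction on $K$. The base case reduces to cotensoring with free spectra of the form $F_n \mathbb{S}^0$, for which $Z^{F_n \mathbb{S}^0} \simeq \Omega^n Z$; this is $S$-local precisely because $S$ is stable, i.e., by hypothesis the class of $S$-local objects is closed under $\Omega$. A cell attachment $K_{\alpha+1} = K_\alpha \cup_{F_n(\partial \Delta^k_+)} F_n(\Delta^k_+)$ in $\Sp$ becomes, after cotensoring into the fibrant object $Z$, a pullback of fibrations between fibrant objects, that is, a homotopy pullback in $\dcal$. Since the class of $S$-local objects forms a reflective subcategory of $\Ho(\dcal)$, it is closed under homotopy pullbacks, and the inductive step goes through. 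Transfinite compositions of cell attachments on the $K$ side become sequential homotopy inverse limits on the $Z^K$ side, handled by the same closure property. Stability of $S$ is therefore exactly the input that makes the cellular induction possible, and it is the real heart of the proposition.
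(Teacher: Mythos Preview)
Your argument is essentially correct, but it takes a genuinely different route from the paper's. The paper does not prove the key cotensor lemma by cellular induction; instead it reduces the spectral pushout-product axiom to the simplicial one. Concretely, for a generating cofibration $j \co F_n K \to F_n L$ of $\Sp$ and a generating acyclic cofibration $i$ of $L_S \dcal$, the paper suspends $n$ times (using stability of $S$ to know that $S$-equivalences are closed under $\Sigma^{\pm 1}$) and uses $\Sigma^n F_n K \simeq F_0 K$ to replace $i \square j$ by $i \square (F_0 K \to F_0 L)$. Since $- \smashprod F_0(-)$ is the underlying simplicial tensoring, this last map is an $S$-equivalence by Hirschhorn's theorem \cite[Theorem 4.1.1]{hir03} that a left Bousfield localisation of a simplicial model category is again simplicial. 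So the paper's proof is two lines once one observes the suspension trick, whereas your cellular induction on $K$ essentially reproves, in the spectral setting, the content behind Hirschhorn's simplicial result. Your approach is more self-contained and makes the role of the stability hypothesis transparent at the base of the induction; the paper's is shorter and leverages an existing black box.

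One small correction: $Z^{F_n \mathbb{S}^0} \simeq \Sigma^n Z$, not $\Omega^n Z$ (since $F_n \mathbb{S}^0 \simeq \Sigma^{-n} S$, tensoring desuspends and cotensoring suspends). This does not damage your argument---closure of $S$-local objects under $\Sigma$ is precisely the hypothesis that $S$ is stable---but it is worth getting the sign right, since it is exactly here that stability enters.
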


\begin{proof}
Since $\dcal$ is left proper and cellular, $L_S \dcal$ exists. 
We must prove that if $i$ is a cofibration of $L_S \dcal$ and 
$j$ is a cofibration of $\Sp$ then 
$i \square j$ is a cofibration of $L_S \dcal$
that is a weak equivalence (in $L_S \dcal$) if either of 
$i$ or $j$ is. 
Since $\dcal$ is spectral and the cofibrations are 
unchanged by left Bousfield localisation, we know that 
$i \square j$ is a cofibration whenever $i$ and $j$ are. 
Furthermore if $j$ is an acyclic cofibration of symmetric spectra, 
then $i \square j$ is a weak equivalence in $\dcal$ and hence it is also
an $S$-equivalence. 

The third case is where $i$ is an acyclic cofibration of 
$L_S \dcal$ and $j$ is a cofibration of symmetric spectra. 
We must show that $i \square j$ is an $S$-equivalence. 
By \cite[Lemma 4.2.4]{Hov99} it suffices to prove this for
$j$ a generating cofibration of symmetric spectra and $i$ a generating
acyclic cofibration of $L_S \dcal$. 
Hence we may assume that 
$j$ has form $$F_n K \to F_n L$$ where 
$F_n$ is the left adjoint to evaluation at level $n$, and 
$K$ and $L$ are simplicial sets. 
By \cite[Proposition 4.5.1]{hir03} the domain of $j$ is cofibrant,
so it follows that both the domain and codomain of 
$i \square j$ are cofibrant. 
The set $S$ is stable, so the class of $S$-equivalences in $\Ho(\dcal)$ 
is closed under suspension and desuspension.  
Thus $i \square j$ is an $S$-equivalence if and only if 
$$\Sigma^n (i \square j) \cong i \square \Sigma^n j$$
is an $S$-equivalence for all $n$. Note that we do not need to use the 
derived functor of $\Sigma$ in that statement since
all the terms are cofibrant. 

We know that $\Sigma^n F_n K$ is weakly equivalent to 
$F_0 K$ in $\Sp$. Hence for any cofibrant 
$X \in \dcal$, $$X \smashprod \Sigma^n F_n K \to 
X \smashprod F_0 K$$ is a weak equivalence of $\dcal$. 
We also know that the domains of the maps 
$i \square \Sigma^n j$ and $i \square (F_0 K \to F_0 L)$ are
pushouts of cofibrations between cofibrant objects. 
It follows that $i \square \Sigma^n j$ is weakly equivalent 
to the map $i \square (F_0 K \to F_0 L)$. 
The bifunctor $$- \smashprod F_0 - \co \dcal \times \sset \to \dcal$$
gives $\dcal$ the structure of a simplicial model 
category. We may now use \cite[Theorem 4.1.1]{hir03}, 
which states that since $\dcal$ is simplicial, so is 
$L_S \dcal$. Consequently we see see that $i \square (F_0 K \to F_0 L)$
is an $S$-equivalence. Hence $i \square j$ is also an $S$-equivalence
and $L_S \dcal$ is a spectral model category. 
\end{proof}

\begin{proposition}\label{prop:espectral}
Let $\dcal$ be a left proper, cellular and spectral model category 
with generating cofibrations $I_\dcal$ and 
generating acyclic cofibrations $J_\dcal$.
Let $J_E$ be the set of generating cofibrations for 
$L_E \Sp$.
Define 
\[
S= I_{\dcal} \square J_E = 
\{
i \square j \ | \ i \in I_{\dcal}, \ \ j \in J_E
\}.
\] 
Then $L_S \dcal$ is an $L_E \Sp$-model category
and hence is stably $E$-familiar.  
\end{proposition}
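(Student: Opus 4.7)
The plan is to verify that $L_S\dcal$ carries an $L_E\Sp$-model category structure; the stably $E$-familiar conclusion will then follow from \cite[Proposition 7.6]{BarRoi11b}, which identifies the enriched $\Ho(L_E\Sp)$-module structure on an $L_E\Sp$-model category with the one coming from $E$-local frames, so that every stable frame is automatically $E$-local.

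First, $L_S\dcal$ exists because $\dcal$ is left proper and cellular. Next, I verify that $S$ is stable. Using the spectral enrichment on $\dcal$, a fibrant $Z \in \dcal$ is $S$-local if and only if the mapping spectrum $\Map_\dcal(X,Z)$ is fibrant in $L_E\Sp$ for every cofibrant $X \in \dcal$; this follows by adjoining the pushout-product $i \square j$ for $i \in I_\dcal$ and $j \in J_E$ over to $\Sp$ and using that $J_E$ detects $L_E\Sp$-fibrations. Since $E$-local spectra are closed under suspension, so is the class of $S$-local objects. Hence Proposition \ref{prop:spectral} applies and $L_S\dcal$ is a spectral model category.

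Now I upgrade this to an $L_E\Sp$-model category structure. Since cofibrations are unchanged under left Bousfield localisation, the cofibrations of $L_E\Sp$ are those of $\Sp$ and the cofibrations of $L_S\dcal$ are those of $\dcal$. Two of the three cases of the $L_E\Sp$-pushout-product axiom for $L_S\dcal$ therefore reduce to the $\Sp$-pushout-product axiom for $L_S\dcal$, which is exactly what we have just established. Only one case is new: for a cofibration $i$ of $L_S\dcal$ and an acyclic cofibration $j$ of $L_E\Sp$, the map $i \square j$ must be an $S$-equivalence. By the very definition of $S$, this is true whenever $i \in I_\dcal$ and $j \in J_E$. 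I would then extend to all such pairs in two stages: for each fixed $j \in J_E$, the class of cofibrations $i$ of $\dcal$ for which $i \square j$ is an $S$-equivalence is closed under coproduct, cobase change along cofibrations, transfinite composition and retract, so since it contains $I_\dcal$ it contains every cofibration of $\dcal$; a parallel argument in the second variable, using that every acyclic cofibration of $L_E\Sp$ is a retract of a relative $J_E$-cell complex, then extends $J_E$ to every acyclic cofibration of $L_E\Sp$.

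The main obstacle is precisely this two-stage closure argument. Although the closure properties of $S$-equivalences and the colimit behaviour of the pushout-product are both standard, making them interact correctly requires careful bookkeeping: one needs all relevant objects to be cofibrant so that $S$-equivalences can be transported through pushouts, coproducts and transfinite compositions, and one needs to use that $- \square j$ with a fixed cofibration $j$ is a left-adjoint-type construction which commutes with the appropriate colimits in its first variable (and similarly for $i \square -$). Once this is carried out, $L_S\dcal$ is an $L_E\Sp$-model category, and stably $E$-familiar follows as explained above.
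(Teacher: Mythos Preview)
Your proposal is correct and follows the same overall architecture as the paper's proof, but the paper handles the two places you flag as delicate in a more economical way.

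For stability of $S$, the paper does not characterise $S$-local objects via mapping spectra. Instead it observes directly that the set $J_E$ of generating acyclic cofibrations of $L_E\Sp$ is closed under desuspension (for each $j \in J_E$ there is $j' \in J_E$ with $\Sigma j' \simeq j$, essentially because the generators involve the free functors $F_n$ and $\Sigma F_{n+1} \simeq F_n$). The same then holds for $S = I_\dcal \square J_E$, so $S$ is stable in the sense of \cite[Definition 3.2]{BarRoi12} and Proposition~\ref{prop:spectral} applies. Your mapping-spectrum argument is a perfectly good alternative and has the advantage of giving an intrinsic description of the $S$-local objects, but it is more work than is strictly needed here.

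For the final step, where you propose a two-stage closure argument and identify it as the ``main obstacle'', the paper simply invokes \cite[Lemma 4.2.4]{Hov99}. That lemma, proved by adjunction with the pullback-hom rather than by tracking cell attachments, says exactly that to verify a pushout-product condition of this shape it suffices to check it on the generating sets. Once reduced to $i \in I_\dcal$ and $j \in J_E$, the map $i \square j$ lies in $S$ itself and is therefore an $S$-equivalence; no further bookkeeping is required. Your closure argument would also succeed (the key point, which you allude to, is that $i \square j$ is always a cofibration here, so one is really working with $S$-acyclic cofibrations, which are saturated), but it reproves a special case of Hovey's lemma by hand.
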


\begin{proof}
The set $J_E$ is closed under desuspension in the sense that for 
any element $j \in J_E$ there is an element $j'$ with 
$\Sigma j' \simeq j$. It follows that the same holds for 
$S$, so it is stable in the sense of \cite[Definition 3.2]{BarRoi12}. 
Thus $L_S \dcal$ is also a stable model category. 
By Lemma \ref{prop:spectral} it is also a $\Sp$-model category. 

To see that it is an $L_E \Sp$-model category 
we only need check that if $i$ is a cofibration of $L_S \dcal$ and 
$j$ is an acyclic cofibration of $L_E \Sp$ then 
$i \square j$ is an $S$-equivalence. 
By \cite[Lemma 4.2.4]{Hov99} it suffices to prove this for
$i \in I_\dcal$ and $j \in J_E$. But then $i \square j$
is an element of $S$ and hence is an $S$-equivalence. 
\end{proof}

We now show that this set $S$ has the correct homotopical behaviour in terms of $E$-familiarity by 
giving another description of the weak equivalences
of $L_S \dcal$. 

\begin{proposition}\label{prop:classofequivalences}
Let $\dcal$ be a left proper, cellular, spectral model category, 
such that the domains of the generating cofibrations 
of $\dcal$ are cofibrant. 
Let $T$ be the class of maps
\[
T=\{ X \smashprod^L f \,\,|\,\, X \in \dcal, \, \, f \mbox{ is an $E$-equivalence of spectra}\}.
\]
Then the class of $T$-equivalences is equal to the class of 
$S$-equivalences.
\end{proposition}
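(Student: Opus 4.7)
\emph{Proof plan.} The plan is to prove the equality by showing that the classes of $T$-local objects and $S$-local objects coincide; the equality of equivalence classes then follows formally, since a map is a $U$-equivalence (for any class $U$) precisely when mapping into it yields a weak equivalence of simplicial sets for every $U$-local target.

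For the direction \emph{$T$-local implies $S$-local}, I fix a fibrant $T$-local object $Z$. Unwinding the definition through the pullback-hom/pushout-product adjunction for the spectral structure on $\dcal$ shows $T$-locality of $Z$ to be equivalent to $\Map_\dcal(Y,Z)$ being an $E$-local spectrum for every $Y \in \dcal$. Given $i\co A \to B$ in $I_\dcal$, the hypothesis that domains of generating cofibrations are cofibrant forces both $A$ and $B$ to be cofibrant, and then SM7 for the spectral structure makes $\Map_\dcal(i,Z)\co \Map_\dcal(B,Z) \to \Map_\dcal(A,Z)$ a fibration of $\Sp$ between $E$-local objects. Such a map is automatically a fibration of $L_E\Sp$, and hence has the right lifting property against any $j \in J_E$; the Leibniz adjunction translates this into $\map_\dcal(i \square j, Z)$ being a weak equivalence, so $Z$ is $S$-local.

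For the reverse direction I will strengthen the statement to $T \subseteq \{S\text{-equivalences}\}$, from which it follows that every $S$-local object is $T$-local (since $S$-local objects see all $S$-equivalences as weak equivalences on $\map_\dcal(-,Z)$). An element of $T$ has the form $Y \smashprod^L f$, and we may arrange that $Y$ is cofibrant in $\dcal$ and $f\co A \to B$ an $E$-equivalence between cofibrant spectra. Factor $f$ in $L_E\Sp$ as $A \xrightarrow{\iota} C \xrightarrow{q} B$ with $\iota$ an acyclic cofibration and $q$ an acyclic fibration of $L_E\Sp$; since cofibrations and trivial fibrations are unchanged by Bousfield localisation, $q$ is also an acyclic fibration of $\Sp$. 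Proposition \ref{prop:espectral} makes $L_S\dcal$ an $L_E\Sp$-model category, so $Y \smashprod \iota = (\emptyset \to Y) \square \iota$ is an acyclic cofibration in $L_S\dcal$, hence an $S$-equivalence; meanwhile $Y \smashprod q$ is a weak equivalence of $\dcal$ by Ken Brown's lemma applied to the left Quillen functor $Y \smashprod -\co \Sp \to \dcal$. Composing, $Y \smashprod f$ is an $S$-equivalence.

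The main obstacle is the first direction: moving from $E$-locality of the mapping spectra to the fibration of $L_E\Sp$ relies on combining SM7 with the general principle that fibrations between local objects of a left Bousfield localisation are fibrations in the localised structure. One also needs to verify carefully that the cofibrancy assumption on the domains of $I_\dcal$ is enough to identify the underived $\Map_\dcal$ with its derived version, so that SM7 applies on the nose and the pullback-hom/pushout-product adjunction used to interpret $S$-locality is the honest one rather than only a derived one.
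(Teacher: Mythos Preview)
Your proof is correct, but one of the two directions takes a genuinely different route from the paper's argument.

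For the inclusion $T \subseteq \{S\text{-equivalences}\}$ your factorisation argument is essentially the same as the paper's, which simply observes that $X \smashprod - \co L_E\Sp \to L_S\dcal$ is left Quillen by Proposition~\ref{prop:espectral} and hence carries $E$-equivalences between cofibrant spectra to $S$-equivalences; your explicit factorisation of $f$ through an $L_E\Sp$-acyclic cofibration and an acyclic fibration just unpacks this.

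For the other direction the paper proceeds dually, showing directly that each $i \square j \in S$ is a $T$-equivalence: since $X,Y,A,B$ are cofibrant, both $X \smashprod j$ and $Y \smashprod j$ lie in $T$, the map $Y \smashprod A \to P$ into the pushout is then a $T$-equivalence by left properness (Hirschhorn's Lemma 3.4.2), and two-out-of-three finishes. Your argument instead characterises $T$-local objects as those with $E$-local mapping spectra, then invokes Hirschhorn's result that a fibration between fibrant local objects is a local fibration to conclude $\Map_\dcal(i,Z)$ is an $L_E\Sp$-fibration, and finally uses the enriched adjunction together with the simplicial SM7 axiom for $L_E\Sp$ to deduce that $\map_\dcal(i \square j, Z) \cong \map_\Sp^\square(j,\Map_\dcal(i,Z))$ is an acyclic fibration. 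Note that your phrase ``the Leibniz adjunction translates this'' is doing slightly more work than a bare lifting adjunction: you need the simplicial enrichment of $L_E\Sp$ to pass from the lifting property to the mapping-space equivalence, so this step should be spelled out.

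The paper's route is shorter and more elementary, needing only closure of $T$-equivalences under cobase change. Your route is more conceptual: it makes explicit the equivalence between $T$-locality and $E$-locality of mapping spectra, which foreshadows the characterisation of stably $E$-familiar categories used later in the paper, at the cost of importing the extra input from \cite{hir03} on local fibrations.
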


\begin{proof}
Take some cofibrant $X \in \dcal$. Then the functor
$$X \smashprod - \co L_E \Sp \to L_S \dcal$$ 
is a left Quillen functor by 
Proposition \ref{prop:espectral}. 
Hence $X \smashprod -$ takes $E$-equivalences
between cofibrant spectra to $S$-equivalences. 
Thus every element of $T$ is a weak equivalence in $L_S \dcal$. 

Now we will show that every element of $S$ is also a $T$-equivalence.
Consider $i \square j \in S$ for 
$i \co X \to Y$ a generating cofibration of $\dcal$ and 
$j \co A \to B$ a generating acyclic cofibration for $L_E \Sp$. 
Since $X$, $Y$, $A$ and $B$ are all cofibrant, the maps 
$X \smashprod j$ and $Y \smashprod j$ are in the class $T$. 
Let $P$ be the domain of $i \square j$, 
then by \cite[Lemma 3.4.2]{hir03}, the map from 
$Y \smashprod A \to P$ is also a $T$-equivalence. 
It follows by the two-out-of-three property that 
$i \square j$ is a $T$-equivalence.   
\end{proof}

If the category $\ccal$ is already stably $E$-familiar then the class
$T$ is already contained in the category of weak equivalences. 
Hence so is the set $S$, and $\ccal$ is in fact an 
$L_E \Sp$-model category. 

\begin{corollary}
Let $\dcal$ be a left proper, cellular, spectral model category 
that is stably $E$-familiar. Assume that domain of the generating
cofibrations of $\ccal$ are cofibrant. Then $\ccal$ is an 
$L_E \Sp$-model category.  \qed
\end{corollary}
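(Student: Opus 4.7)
The plan is to combine the stably $E$-familiar hypothesis with Proposition \ref{prop:classofequivalences} to show $L_S\dcal = \dcal$ as model categories, and then invoke Proposition \ref{prop:espectral}.

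First I would extract from the stably $E$-familiar hypothesis that the class
\[
T=\{ X \smashprod^L f \mid X \in \dcal,\ f \text{ an $E$-equivalence of spectra}\}
\]
of Proposition \ref{prop:classofequivalences} is contained in the class $W$ of weak equivalences of $\dcal$. Indeed, by definition every stable frame on $\dcal$ is an $E$-local frame, so for each fibrant-cofibrant $X \in \dcal$ the functor $X \smashprod - \co L_E\Sp \to \dcal$ is a left Quillen functor, and hence by Ken Brown's lemma sends $E$-equivalences between cofibrant spectra to weak equivalences in $\dcal$. Up to weak equivalence this is enough, since one may replace an arbitrary object by a fibrant-cofibrant one without altering $X \smashprod^L f$.

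Next I would argue that the localised model structure $L_S\dcal$ coincides with $\dcal$. Since $T \subseteq W$, every object of $\dcal$ is automatically $T$-local: homotopy function complexes are invariant under weak equivalences in the source variable, so $\map_\dcal(t,Z)$ is a weak equivalence of simplicial sets for all $t \in T$ and all $Z$. Consequently a map is a $T$-equivalence if and only if it induces weak equivalences on $\map_\dcal(-,Z)$ for every $Z \in \dcal$, which is the standard characterisation of weak equivalences of $\dcal$. By Proposition \ref{prop:classofequivalences} the $T$-equivalences coincide with the $S$-equivalences, so the $S$-equivalences are precisely the weak equivalences of $\dcal$. Since $L_S\dcal$ and $\dcal$ share the same underlying category and the same cofibrations, they agree as model categories.

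Finally, Proposition \ref{prop:espectral} says that $L_S\dcal$ is an $L_E\Sp$-model category, whence so is $\dcal$. I do not foresee a serious obstacle: the hypothesis on cofibrant domains of the generating cofibrations is only needed in order to apply Proposition \ref{prop:classofequivalences}, and the one subtle point is the observation that a set $T$ of weak equivalences yields $T$-equivalences equal to $W$, which is a direct consequence of the fact that homotopy function complexes detect weak equivalences.
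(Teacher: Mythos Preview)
Your proposal is correct and follows essentially the same line as the paper: the paper's argument is the short paragraph immediately preceding the corollary, which observes that stably $E$-familiar implies $T$ consists of weak equivalences, whence $S$ does as well, so $L_S\dcal=\dcal$ and Proposition~\ref{prop:espectral} applies. You have simply spelled out in more detail why $T\subseteq W$ forces the $T$-equivalences (and hence, via Proposition~\ref{prop:classofequivalences}, the $S$-equivalences) to coincide with the weak equivalences.
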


\section{\texorpdfstring{$E$--}{E-}Familiarisation of stable model categories}\label{sec:stablefamiliarisation}

We now want to consider more model categories that are not necessarily spectral. 
Consider a proper and cellular stable model category 
$\ccal$. By \cite[Theorem 7.2]{BarRoi12}
$\ccal$ is Quillen equivalent to a spectral model category, namely 
the category $\dcal = \Sp^\Sigma(s\ccal)$ of symmetric spectra in simplicial objects in $\ccal$  
equipped with a non-standard model structure. 
Hence there is a Quillen equivalence which by abuse of notation we call
\[
\Sigma^\infty: \ccal \lradjunction \dcal= \Sp^\Sigma(s\ccal): \Omega^\infty
\]
This model category 
$\dcal$ is also proper and cellular. Furthermore, 
if the generating cofibrations for $\ccal$ have cofibrant
domains, then so do the generating cofibrations for $\dcal$.

\begin{theorem}\label{thm:Eexistence}
Let $\ccal$ be a stable, proper and cellular model category, 
such that the domains of the generating cofibrations 
of $\ccal$ are cofibrant. 
Then define $\ccal_E$ to be the left Bousfield
localisation of $\ccal$ at the set of maps
$\Omega^\infty \fibrep S$, where 
$\fibrep$ denotes fibrant replacement in $\Sp^\Sigma(s\ccal)$ 
and where $S = I_\dcal \square J_E$ as above. 
Then 
\begin{enumerate}
\item $\ccal_E$ is stably $E$-familiar, 
\item the weak equivalences of $\ccal_E$ are the 
$T'$-equivalences, for $T'$ the class below  
\[
T'=\{ X \smashprod^L f \,\,|\,\, X \in \ccal, \, \, f \mbox{ is an $E$-equivalence of spectra}\}
\]
\item if $F \co \ccal \to \ecal$ is a left Quillen functor and 
$\ecal$ is stably $E$-familiar, then $F$ factors over 
$\ccal \to \ccal_E$.
\end{enumerate}
\end{theorem}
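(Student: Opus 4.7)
The plan is to obtain $\ccal_E$ from the spectral, stably $E$-familiar model category $L_S \dcal$ by transferring along the Quillen equivalence
\[
\Sigma^\infty \co \ccal \lradjunction \dcal = \Sp^\Sigma(s\ccal) \co \Omega^\infty
\]
of \cite[Theorem 7.2]{BarRoi12}. Since $\ccal$ is left proper and cellular, Hirschhorn's existence theorem \cite{hir03} produces $\ccal_E = L_{\Omega^\infty \fibrep S}\ccal$. The localising set $\Omega^\infty \fibrep S$ is tailor-made so that the standard Hirschhorn transfer of Bousfield localisations along an adjunction lifts $(\Sigma^\infty, \Omega^\infty)$ to a Quillen adjunction
\[
\Sigma^\infty \co \ccal_E \lradjunction L_S \dcal \co \Omega^\infty,
\]
and since the underlying adjunction was a Quillen equivalence, its derived unit and counit remain isomorphisms after localisation (the class of weak equivalences having grown symmetrically on both sides), so the localised adjunction is still a Quillen equivalence.

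For (1), Proposition \ref{prop:espectral} gives that $L_S \dcal$ is stably $E$-familiar; by the characterisation \cite[Theorem 7.8]{BarRoi11b} this means all derived homotopy mapping spectra $\mathbb{R}\Map(X,Y)$ are $E$-local. Since a Quillen equivalence of stable model categories identifies derived homotopy mapping spectra (a consequence of Lenhardt's stable frames \cite{Len12}), stable $E$-familiarity transfers to $\ccal_E$. For (2), apply Proposition \ref{prop:classofequivalences} to $\dcal$, whose generating cofibrations have cofibrant domains by \cite[Theorem 7.2]{BarRoi12}, to identify the $S$-equivalences with the analogous class $T = \{Z \smashprod^L f : Z \in \dcal,\ f \text{ an } E\text{-equivalence of spectra}\}$. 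Combining this with the Quillen equivalence of the first paragraph together with the compatibility of $\Sigma^\infty$ with the $\Ho(\Sp)$-action, namely a natural isomorphism $\Sigma^\infty(X \smashprod^L f) \cong \Sigma^\infty X \smashprod^L f$ in $\Ho(\dcal)$, yields the identification of the weak equivalences of $\ccal_E$ with the $T'$-equivalences.

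For (3), by the universal property of left Bousfield localisation it suffices to show that a left Quillen $F \co \ccal \to \ecal$ with $\ecal$ stably $E$-familiar sends the $T'$-equivalences between cofibrant objects to weak equivalences in $\ecal$. Given cofibrant $X \in \ccal$ and an $E$-equivalence $f$ between cofibrant spectra, the compatibility of $F$ with stable frames supplies a natural isomorphism $\mathbb{L}F(X \smashprod^L f) \cong \mathbb{L}F(X) \smashprod^L f$ in $\Ho(\ecal)$, and the right-hand side is a weak equivalence because $\ecal$ is stably $E$-familiar. The main obstacle is rigorously establishing this compatibility of an arbitrary left Quillen functor with the $\Ho(\Sp)$-action from stable frames, since frames are only defined up to weak equivalence and cofibrant and fibrant replacements must be chosen consistently. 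This is essentially the content of Lenhardt's framework \cite{Len12}, which shows that a left Quillen functor between stable model categories intertwines the $\Sigma$-cospectrum resolutions defining the action and hence descends to a $\Ho(\Sp)$-module functor on homotopy categories; once this is in place, (2) and (3) fall out quickly and (1) follows from the homotopy-mapping-spectrum characterisation.
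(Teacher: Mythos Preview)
Your proposal is correct and follows essentially the same route as the paper: transfer the localisation along the Quillen equivalence $(\Sigma^\infty, \Omega^\infty)$ via \cite[Theorem 3.3.20]{hir03}, combine Proposition \ref{prop:classofequivalences} with Lenhardt's $\Ho(\Sp)$-module compatibility \cite[Theorem 6.3]{Len12} to identify the weak equivalences with the $T'$-equivalences, and deduce the universal property from that same compatibility applied to an arbitrary left Quillen $F$. Where you unpack the Quillen-invariance of stable $E$-familiarity through the mapping-spectrum characterisation, the paper simply cites \cite[Lemma 7.10]{BarRoi12}, but this is the same argument.
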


\begin{proof}
The model categories $\ccal$ and $\dcal=\Sp^\Sigma(s\ccal)$ are Quillen equivalent. Hence \cite[Theorem 3.3.20]{hir03} tell us that the adjunction
\[
\Sigma^\infty: \ccal \lradjunction \dcal: \Omega^\infty.
\] 
induces a 
Quillen equivalence between $L_{\Omega^\infty \fibrep S} \ccal$ and    
$L_{S} \dcal$. 
Thus by \cite[Lemma 7.10]{BarRoi12} 
$\ccal_E = L_{\Omega^\infty \fibrep S} \ccal$
is stably $E$-familiar. 

We may also conclude that the left derived functor of $\Sigma^\infty$
induces an bijection between the weak equivalences of $\ccal_E$
(considered as a class in $\Ho \ccal$) and the $S$-equivalences of $\Ho \dcal$. 
Proposition \ref{prop:classofequivalences} tells us that 
the class of $S$-equivalences in $\dcal$ is equal to the class of $T$-equivalences
where
\[
T=\{ X \smashprod^L f \,\,|\,\, X \in \dcal, \, \, f \mbox{ is an $E$-equivalence of spectra}\}
\]
Consider the class of maps 
\[
T'=\{ X \smashprod^L f \,\,|\,\, X \in \ccal, \, \, f \mbox{ is an $E$-equivalence of spectra}\}
\]
Let $\mathbb{L} \Sigma^\infty$ and $\mathbb{R} \Omega^\infty$ denote the left and right derived functors of $\Sigma^\infty$ and $\Omega^\infty$ respectively.
By \cite[Theorem 6.3]{Len12} 
\[
\mathbb{L} \Sigma^\infty (X \smashprod^L f)
= 
(\mathbb{L} \Sigma^\infty X) \smashprod^L f.
\]
Hence $\mathbb{L} \Sigma^\infty$ takes elements of 
$T'$ to elements of $T$. 
Consider some element $Y \smashprod^L f$ of $T$.
This is weakly equivalent to 
\[
(\mathbb{L}\Sigma^\infty \mathbb{R} \Omega^\infty Y) \smashprod^L f)
\]
and hence is in $\mathbb{L}\Sigma^\infty T'$. 
Thus the derived functor of 
$\Sigma^\infty$ induces a bijection between the class $T'$ and 
the class $T$ up to weak equivalence. 
As a consequence the derived functor of $\Sigma^\infty$ induces a bijection between 
the class of $T'$-equivalences and 
the class of $T$-equivalences. 
It follows that the $T'$-equivalences must be 
the class of weak equivalences of $\ccal_E$. 

For the final point, let $F \co \ccal \to \ecal$ be a 
left Quillen functor. If $\ecal$ is stably $E$-familiar, 
then the left derived functor of $F$ takes 
the $T'$-equivalences to weak equivalences of $\ecal$. 
Hence $F \co \ccal_E \to \ecal$ is also a left Quillen functor. 
\end{proof}

\begin{corollary}
Let $\ccal$ be a stable, proper and cellular model category, 
such that the domains of the generating cofibrations 
of $\ccal$ are cofibrant. Then $\ccal_E$
is the closest stably $E$-familiar model category to $\ccal$. \qed
\end{corollary}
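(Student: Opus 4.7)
The plan is to interpret the phrase \emph{closest stably $E$-familiar model category to $\ccal$} as a universal property in the $2$-category of model categories and left Quillen functors, and then to read off that property directly from Theorem \ref{thm:Eexistence}. Concretely, I would say that a stably $E$-familiar model structure $\dcal$ built on the underlying category of $\ccal$ via left Bousfield localisation is \emph{closest} to $\ccal$ precisely when every left Quillen functor $F \co \ccal \to \ecal$ into a stably $E$-familiar model category $\ecal$ factors as $\ccal \to \dcal \to \ecal$, where the first arrow is the identity of underlying categories (hence left Quillen by construction) and the second is again left Quillen.

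With this interpretation in hand, the proof is essentially an unpacking. First I would verify that $\ccal_E$ is itself a candidate: by part (1) of Theorem \ref{thm:Eexistence} it is stably $E$-familiar, and since $\ccal_E$ is defined as a left Bousfield localisation of $\ccal$ the identity functor $\ccal \to \ccal_E$ is a left Quillen functor by the very construction of $L_S\ccal$ (cofibrations coincide and all weak equivalences of $\ccal$ are $\Omega^\infty\fibrep S$-equivalences). Then, given any left Quillen functor $F \co \ccal \to \ecal$ with $\ecal$ stably $E$-familiar, part (3) of Theorem \ref{thm:Eexistence} tells us that $F$ remains left Quillen when viewed as $F \co \ccal_E \to \ecal$, so we obtain the required factorisation $\ccal \to \ccal_E \xrightarrow{F} \ecal$.

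There is no serious obstacle here; the only thing to be careful about is the meaning of \emph{closest}. I would briefly remark that this universal property characterises $\ccal_E$ uniquely up to the identity among model structures on $\ccal$ obtained as left Bousfield localisations, since if $\dcal$ were another such closest stably $E$-familiar model structure on $\ccal$ then applying the universal property in both directions to the identity functor yields mutually inverse left Quillen functors $\ccal_E \rightleftarrows \dcal$ which agree with the identity on the underlying category, forcing the two model structures to coincide. This uniqueness remark, together with the factorisation above, justifies the word \emph{the} in \emph{the closest stably $E$-familiar model category}, and completes the corollary.
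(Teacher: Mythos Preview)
Your proposal is correct and matches the paper's approach: the corollary carries a \qed and no separate proof, because it is meant to be read as an immediate restatement of parts (1) and (3) of Theorem~\ref{thm:Eexistence}, with ``closest'' understood exactly as the universal property you spell out. Your added uniqueness remark is a reasonable elaboration but goes slightly beyond what the paper bothers to say.
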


In particular a model category $\ccal$ is stably $E$-familiar if and only if $\ccal_E=\ccal$.

\begin{rmk}
The assumptions on $\ccal$ are more reasonable than they might seem in practice. 
Since we want to perform a left Bousfield localisation, we will 
have to assume that $\ccal$ is left proper and cellular. 
To assume that $\ccal$ is also right proper is not too much of a 
restriction on the kinds of model categories we are able to 
deal with. 

We also need another assumption: that the domains of the generating
cofibrations of $\ccal$ are cofibrant. This is a subtle assumption
that occurs elsewhere in the literature, for example in 
\cite{hov01}. We note that this assumption holds for 
almost all of the cofibrantly generated model categories 
that arise naturally. 
\end{rmk}

It is easy to check that the homotopy mapping spectra for $\ccal_E$ are given by 
the formula below, where $Y_E$ is the fibrant replacement of $Y$ in $\ccal_E$. 
\[
\mathbb{R} \Map_{\ccal_E}(X,Y) = \mathbb{R} \Map_{\ccal}(X, Y_E)
\]
In particular, this mapping spectrum is $E$-local. We can use this to draw some immediate consequences of $E$-familiarisation.

For example, the chromatic Johnson-Wilson theories $E(n)$ satisfy $$L_{E(n-1)}L_{E(n)}=L_{E(n)}$$ \cite[7.5.3]{Rav92}. Thus,

\begin{corollary}
For a proper and cellular stable model category we have $$(\ccal_{E(n)})_{E(n-1)}=\ccal_{E(n-1)}.$$
\qed
\end{corollary}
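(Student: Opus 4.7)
The plan is to use the universal property of stable $E$-familiarisation provided by Theorem \ref{thm:Eexistence}(3) to produce left Quillen functors between $(\ccal_{E(n)})_{E(n-1)}$ and $\ccal_{E(n-1)}$ in both directions. Since both are model structures on the same underlying category $\ccal$, the resulting functors will necessarily be the identity on underlying categories, so their existence forces the two model structures to coincide.

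For the first direction I would observe that $(\ccal_{E(n)})_{E(n-1)}$ is stably $E(n-1)$-familiar by Theorem \ref{thm:Eexistence}(1). This requires $\ccal_{E(n)}$ to still be proper and cellular with cofibrant domains of generating cofibrations: left properness, cellularity and the cofibrant-domain condition are preserved by left Bousfield localisation, and right properness survives left Bousfield localisation of a proper stable model category at a stable set of maps. The identity $\ccal \to (\ccal_{E(n)})_{E(n-1)}$ is a composition of two successive left Bousfield localisations and is therefore left Quillen. Theorem \ref{thm:Eexistence}(3) then produces the desired left Quillen functor $\ccal_{E(n-1)} \to (\ccal_{E(n)})_{E(n-1)}$.

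For the reverse direction the key step is to show that $\ccal_{E(n-1)}$ is also stably $E(n)$-familiar. The homotopy mapping spectra $\mathbb{R}\Map_{\ccal_{E(n-1)}}(X,Y)$ are $E(n-1)$-local, and the chromatic relation between $E(n-1)$- and $E(n)$-localisation, equivalently the inclusion of $E(n-1)$-local spectra into $E(n)$-local spectra, then implies they are also $E(n)$-local. The characterisation of stably $E$-familiar model categories by locality of mapping spectra completes the verification. The identity $\ccal \to \ccal_{E(n-1)}$ is therefore a left Quillen functor into a stably $E(n)$-familiar category, so Theorem \ref{thm:Eexistence}(3) produces a left Quillen functor $\ccal_{E(n)} \to \ccal_{E(n-1)}$. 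Since the target is also stably $E(n-1)$-familiar, a second application of Theorem \ref{thm:Eexistence}(3) yields the required left Quillen functor $(\ccal_{E(n)})_{E(n-1)} \to \ccal_{E(n-1)}$.

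The main obstacle is the chromatic input: establishing that every $E(n-1)$-local spectrum is $E(n)$-local, so that $\ccal_{E(n-1)}$ is automatically stably $E(n)$-familiar. Once this is in hand, the remainder is a purely formal two-step factorisation through the universal property of $E$-familiarisation.
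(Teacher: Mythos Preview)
Your proposal is correct and is essentially the argument the paper has in mind. The paper gives no proof at all (the corollary carries only a \qed), merely pointing to the chromatic relation $L_{E(n-1)}L_{E(n)}=L_{E(n-1)}$ from Ravenel and the mapping--spectrum formula $\mathbb{R}\Map_{\ccal_E}(X,Y)=\mathbb{R}\Map_\ccal(X,Y_E)$ set up just before; your two applications of the universal property of Theorem \ref{thm:Eexistence}(3), together with the mapping--spectrum characterisation of stable $E$-familiarity, simply spell out how those ingredients combine.
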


We can use further use our knowledge of stably $E$-familiar algebraic 
model categories described at the end of Section \ref{sec:framings} 
to read off the following corollaries.

\begin{corollary}
Let $\ccal$ be an algebraic model category and $K(n)$ the $n^{th}$ Morava-$K$-theory for $n \ge 1$. Then $\ccal_{K(n)}$ is trivial. \qed
\end{corollary}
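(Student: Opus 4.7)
The plan is to show that every homotopy mapping spectrum in $\ccal_{K(n)}$ is contractible, which forces the homotopy category of $\ccal_{K(n)}$ to be trivial. The key input is the formula
\[
\mathbb{R}\Map_{\ccal_{K(n)}}(X,Y) \simeq \mathbb{R}\Map_\ccal(X, Y_{K(n)})
\]
recorded immediately after Theorem \ref{thm:Eexistence}, which transports the question back into the original algebraic model category $\ccal$ where we know what the mapping spectra look like.

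First I would note that, since $\ccal$ is algebraic, every homotopy mapping spectrum in $\ccal$ is a product of Eilenberg--MacLane spectra, as recalled at the end of Section \ref{sec:framings}. Applying this to the pair $(X, Y_{K(n)})$ shows that the spectrum $\mathbb{R}\Map_{\ccal_{K(n)}}(X,Y)$ is an $H\mathbb{Z}$-module spectrum for every choice of $X,Y$.

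On the other hand, Theorem \ref{thm:Eexistence}(1) tells us that $\ccal_{K(n)}$ is stably $K(n)$-familiar, so by the characterisation theorem stated just after the definition of stably $E$-familiar, every $\mathbb{R}\Map_{\ccal_{K(n)}}(X,Y)$ is $K(n)$-local. Combining both observations, each mapping spectrum is simultaneously a product of Eilenberg--MacLane spectra and $K(n)$-local. The main step, and arguably the only genuinely non-formal one, is to observe that for $n \geq 1$ any $H\mathbb{Z}$-module spectrum is $K(n)$-acyclic, so being both $K(n)$-local and $K(n)$-acyclic forces contractibility. This is exactly the computational heart of the first bullet point at the end of Section \ref{sec:framings}, and rather than reprove it I would appeal directly to \cite[Section 9]{BarRoi11b}, which relies on Guti\'errez's calculations in \cite{Gut10}.

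The main obstacle is therefore conceptual rather than technical: one must be satisfied that ``$\ccal_{K(n)}$ is trivial'' can be read off from the vanishing of all derived mapping spectra. Since a stable model category is determined up to Quillen equivalence by its triangulated homotopy category together with its $\Ho(\Sp)$-action, and since all mapping spectra being contractible means any two objects admit only the zero map in the homotopy category, the homotopy category of $\ccal_{K(n)}$ collapses to the zero category, which is the precise meaning of being trivial here.
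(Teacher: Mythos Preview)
Your argument is correct and is essentially an unpacking of what the paper has in mind: the corollary carries a \qed because it is meant to be read off directly from the bullet point ``for $n\ge 1$ there are no algebraic stably $K(n)$-familiar model categories'' together with the fact that $\ccal_{K(n)}$ is stably $K(n)$-familiar. Your route via the mapping-spectrum formula $\mathbb{R}\Map_{\ccal_{K(n)}}(X,Y)\simeq \mathbb{R}\Map_\ccal(X,Y_{K(n)})$ is exactly the mechanism behind that bullet point, and has the mild advantage that you never need to argue that $\ccal_{K(n)}$ itself inherits a $\Ch(\mathbb{Z})$-model structure.

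One caveat: in your final paragraph the sentence ``a stable model category is determined up to Quillen equivalence by its triangulated homotopy category together with its $\Ho(\Sp)$-action'' is false in general---that is precisely the content of the rigidity question, and Theorem~\ref{thm:elocalrigid} establishes it only for $L_E\Sp$, not for arbitrary stable model categories. Fortunately you do not need this claim at all: once every $\mathbb{R}\Map_{\ccal_{K(n)}}(X,Y)$ is contractible, $\pi_0$ gives $[X,Y]^{\ccal_{K(n)}}=0$ for all $X,Y$, so every object is a zero object and $\Ho(\ccal_{K(n)})$ is the zero category. You should simply delete the rigidity clause and keep the elementary observation.
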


\begin{corollary}
Let $\ccal$ be an algebraic model category and let $E(n)$ denote the $n^{th}$ chromatic Johnson-Wilson spectrum. Then $\ccal_{E(n)} = \ccal_{H\mathbb{Q}}$. \qed
\end{corollary}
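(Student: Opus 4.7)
The plan is to verify that each of $\ccal_{E(n)}$ and $\ccal_{H\mathbb{Q}}$ is also stably $E$-familiar for the other homology theory, and then to apply the universal property of Theorem \ref{thm:Eexistence}(3) in both directions. Since both localisations have the same cofibrations as $\ccal$, the two resulting left Quillen functors between them are identities on the underlying category, and being left Quillen in each direction forces their trivial cofibrations (and hence their weak equivalences) to coincide.

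To verify the two familiarity claims, I would use the mapping spectrum formula from the remark immediately after Theorem \ref{thm:Eexistence},
\[
\mathbb{R}\Map_{\ccal_E}(X,Y) \simeq \mathbb{R}\Map_\ccal(X, Y_E),
\]
applied to both $\ccal_{E(n)}$ and $\ccal_{H\mathbb{Q}}$. Because $\ccal$ is algebraic, the discussion at the end of Section \ref{sec:framings} tells us that every $\mathbb{R}\Map_\ccal(-,-)$ is a product of Eilenberg--MacLane spectra, so the mapping spectra of $\ccal_{E(n)}$ and $\ccal_{H\mathbb{Q}}$ are themselves products of Eilenberg--MacLane spectra. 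Theorem \ref{thm:Eexistence}(1) combined with the $\mathbb{R}\Map$-characterisation of stable $E$-familiarity then implies that these mapping spectra are additionally $E(n)$-local and $H\mathbb{Q}$-local respectively.

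The step doing the real work, and the one I expect to be the main obstacle, is the implication that an $E(n)$-local product of Eilenberg--MacLane spectra must be rational; the converse direction, that rational spectra are $E(n)$-local, follows easily from $\langle H\mathbb{Q}\rangle \le \langle E(n)\rangle$. This non-trivial direction is precisely Guti\'errez's computation used in Section \ref{sec:framings} to establish the ``algebraic stably $E(n)$-familiar if and only if rational'' equivalence, and I would quote it directly rather than reprove it. Granting it, the $E(n)$-local mapping spectra of $\ccal_{E(n)}$ are rational, so $\ccal_{E(n)}$ is stably $H\mathbb{Q}$-familiar; symmetrically, the rational mapping spectra of $\ccal_{H\mathbb{Q}}$ are $E(n)$-local, so $\ccal_{H\mathbb{Q}}$ is stably $E(n)$-familiar. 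Applying Theorem \ref{thm:Eexistence}(3) in both directions then delivers the equality of model structures.
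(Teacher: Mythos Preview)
Your argument is correct and is essentially a fleshed-out version of what the paper leaves implicit: the paper marks this corollary with a \qed, intending it to follow immediately from the Section~\ref{sec:framings} fact that an algebraic model category is stably $E(n)$-familiar if and only if it is rational, together with the universal property of $\ccal_E$. Your route via the mapping-spectrum formula $\mathbb{R}\Map_{\ccal_E}(X,Y)\simeq\mathbb{R}\Map_\ccal(X,Y_E)$ is a clean way to make this precise, since it shows directly that the mapping spectra of $\ccal_{E(n)}$ remain products of Eilenberg--MacLane spectra without needing to argue separately that the localisation $\ccal_{E(n)}$ is again a $\Ch(\zz)$-model category; this sidesteps a step the paper does not spell out.
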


If we assume that localisation at $E$ is smashing, we can obtain a nicer description 
of the weak equivalences of $\ccal_E$: in the smashing case $\ccal_E$ is precisely the ``naive'' localisation of $\ccal$ at $L_ES^0$ as motivated in the introduction of this section. Thus, we also obtain that 
\[
\ccal_E=\ccal_{L_ES^0}.
\]
 However, 
for a general model category $\ccal$ and smashing $E$ it is unclear whether this also implies that
the model categories $L_E\ccal$ and $L_{L_ES^0}\ccal$ are Quillen equivalent.

\begin{lemma}
In addition to the assumptions of Theorem \ref{thm:Eexistence}, 
assume  that localisation at $E$ is smashing. Then a map $f$ in $\ccal_E$ is a weak equivalence if and only if
$f \smashprod^L L_E S^0$ is a weak equivalence in $\ccal$.
Hence the weak equivalences of 
$\ccal_E$ are precisely the $L_E S^0$-equivalences. 
\end{lemma}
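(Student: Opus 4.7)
I would prove the two directions separately. For $(\Leftarrow)$, the localisation map $S^0 \to L_E S^0$ is an $E$-equivalence of spectra, so smashing with any $X \in \ccal$ produces a map $X \to X \smashprod^L L_E S^0$ lying in the class $T'$ of Theorem \ref{thm:Eexistence}(2); each such map is therefore a weak equivalence in $\ccal_E$. Applying this to $A$ and $B$ yields a naturality square whose vertical arrows are $\ccal_E$-weak-equivalences. If the bottom map $f \smashprod^L L_E S^0$ is a weak equivalence in $\ccal$ (hence in $\ccal_E$), 2-out-of-3 forces $f$ to be a weak equivalence in $\ccal_E$.

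For $(\Rightarrow)$, I would pass through the spectral model $\dcal = \Sp^\Sigma(s\ccal)$ of Theorem \ref{thm:Eexistence}, justified by the compatibility $\mathbb{L}\Sigma^\infty(f \smashprod^L L_E S^0) \simeq (\mathbb{L}\Sigma^\infty f) \smashprod L_E S^0$ from \cite[Theorem 6.3]{Len12}. In $\dcal$, with $L_E S^0$ chosen cofibrant, the endofunctor $-\smashprod L_E S^0$ is genuinely left Quillen, so the class $W_E^\dcal$ of $L_E S^0$-equivalences in $\dcal$ contains all $\dcal$-weak-equivalences and is closed under 2-out-of-3, retracts, pushouts along cofibrations (using left properness), and transfinite compositions. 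Since every weak equivalence of $L_S\dcal$ factors as an acyclic cofibration of $L_S\dcal$ followed by an acyclic fibration of $\dcal$ (the latter belonging trivially to $W_E^\dcal$), the task reduces to showing $W_E^\dcal$ contains the generating acyclic cofibrations $J_\dcal \cup \Lambda(S)$ of $L_S\dcal$, where $S = I_\dcal \square J_E$.

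The inclusion $J_\dcal \subseteq W_E^\dcal$ is immediate, and for $s = i \square j \in S$ the identity $(i \square j) \smashprod L_E S^0 = i \square (j \smashprod L_E S^0)$, combined with the smashing hypothesis (which makes $j \smashprod L_E S^0$ a genuine $\Sp$-weak-equivalence since $j \in J_E$ is an $E$-equivalence), exhibits this map as an acyclic cofibration of $\dcal$ by the pushout-product axiom for the spectral structure. Elements of $\Lambda(S)$ arise as pushout-products of such $s$ with boundary simplex inclusions under the framing action, and the analogous interchange $(s \square k) \smashprod L_E S^0 \simeq (s \smashprod L_E S^0) \square k$ places them in $W_E^\dcal$ as well. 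The main technical hurdle is rigorously justifying these smash-pushout interchanges at the appropriate level of cofibrancy, a careful variant of the calculations in Proposition \ref{prop:spectral}. Pulling back along the Quillen equivalence $\Sigma^\infty \dashv \Omega^\infty$ using the cited Lenhardt compatibility then gives $(\Rightarrow)$, and the final clause of the lemma, that the $\ccal_E$-weak-equivalences are precisely the $L_E S^0$-equivalences, is an immediate restatement.
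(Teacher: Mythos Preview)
Your argument for $(\Leftarrow)$ is exactly the paper's: the localisation map $S^0 \to L_E S^0$ is an $E$-equivalence, so $X \to X \smashprod^L L_E S^0$ lies in $T'$ (equivalently $T$), hence is a weak equivalence in $\ccal_E$; the naturality square and 2-out-of-3 finish it. The paper carries this out in the spectral model $\dcal$ and then transfers, while you work directly in $\ccal$ via Theorem~\ref{thm:Eexistence}(2), but this is the same argument.

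For $(\Rightarrow)$ your approach is correct but substantially more laborious than the paper's. You factor weak equivalences of $L_S\dcal$ through acyclic cofibrations, reduce to the generating set $J_\dcal \cup \Lambda(S)$, and then verify closure properties of the class $W_E^\dcal$ together with the interchange $(s \square k)\smashprod L_E S^0 \simeq (s \smashprod L_E S^0)\square k$. The paper bypasses all of this with a single observation: once you know every element of $S = I_\dcal \square J_E$ is an $L_E S^0$-equivalence (which you also prove, via $(i\square j)\smashprod L_E S^0 = i \square(j \smashprod L_E S^0)$ and the smashing hypothesis), it follows immediately that \emph{every} $S$-equivalence is an $L_E S^0$-equivalence. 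The reason is the universal property of left Bousfield localisation (\cite[3.3.18]{hir03}): the left Quillen functor $-\smashprod L_E S^0 \co \dcal \to \dcal$ sends $S$ to weak equivalences, hence is already left Quillen out of $L_S\dcal$, and therefore takes all $S$-equivalences between cofibrant objects to weak equivalences. This makes the check on $\Lambda(S)$ and the closure-property bookkeeping unnecessary. Your route has the virtue of being self-contained and making explicit what is packed into that universal property, but the paper's route is shorter and avoids the ``main technical hurdle'' you flag concerning the smash--pushout interchanges.
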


\begin{proof}
We first show the statement for a spectral model category $\dcal$.
Recall the model category $L_S \dcal$ for $S$ the set 
$I_{\ccal} \square J_E$ from the previous section. We will show that the $S$-equivalences are
precisely the $L_E S^0$-equivalences of $\dcal$. 

Every map in the set $S$ is 
an $L_E S^0$-equivalence, hence every $S$-equivalence is a 
$L_E S^0$-equivalence. 
Now take some $L_E S^0$-equivalence $f \co X \to Y$ in $\dcal$. 
The map $$X \to X \smashprod^L L_E S^0$$ 
is a $T$-equivalence and hence an $S$-equivalence. 
Thus the commutative square
\[
\xymatrix@C+1cm{
X \ar[r]^f \ar[d] & 
Y \ar[d] \\
X \smashprod^L L_E S^0
\ar[r]^{f \smashprod^L L_E S^0} & 
Y \smashprod^L L_E S^0
}
\]
shows that $f$ is $S$-equivalent to a weak equivalence in $\dcal$. 
Weak equivalences in $\dcal$ are in particular $S$-equivalences, so by the 2-out-of-3 axiom of model categories, $f$ must be an $S$-equivalence. 

To move this result from $\dcal$ to $\ccal$ 
we use a similar argument to that of 
the second point of Theorem \ref{thm:Eexistence}. 
The Quillen equivalence 
$(\Sigma^\infty, \Omega^\infty)$ takes the 
$L_E S^0$-equivalences of $\dcal$
bijectively to the $L_E S^0$-equivalences of $\ccal$. 
It follows that the $L_E S^0$-equivalences of $\ccal$ are precisely the 
weak equivalences of $\ccal_E$. 
\end{proof}

The following corollary shows that stable $E$-familiarisation restricts to $E$-localisation in the case of spectra. This shows that the notion of $\ccal_E$ is indeed a good candidate for an analogue of $E$-localisation of a general $\ccal$.

\begin{corollary}\label{cor:Elocalmodule}
Consider the category of modules over a ring spectrum $R$. Then 
\[
(R\leftmod)_E=L_E(R\leftmod)
\]
and in particular $$\Sp_E=L_E\Sp.$$ \qed
\end{corollary}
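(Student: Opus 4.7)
The plan is to verify that $R\leftmod$ satisfies the hypotheses of Theorem \ref{thm:Eexistence}, and then identify $(R\leftmod)_E$ with $L_E(R\leftmod)$ by comparing their classes of local objects. Since both are left Bousfield localisations of $R\leftmod$, they share the class of cofibrations of $R\leftmod$, and two such localisations coincide precisely when their classes of local objects (equivalently, fibrant objects) agree. So the problem reduces to matching up these classes.

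First I would recall that $R\leftmod$, for $R$ a (cofibrant) symmetric ring spectrum, is a stable, proper, cellular, spectral model category in which the domains of the generating cofibrations are cofibrant; thus $(R\leftmod)_E$ exists by Theorem \ref{thm:Eexistence}, and $L_E(R\leftmod)$ exists by the construction in \cite[Section VIII.1]{EKMM}. The local objects of $L_E(R\leftmod)$ are the $R$-modules $Z$ whose underlying spectrum is $E$-local. By Theorem \ref{thm:Eexistence}(2) the weak equivalences of $(R\leftmod)_E$ are the $T'$-equivalences for $T'=\{X\smashprod^L f : X\in R\leftmod,\ f \text{ an $E$-equivalence of spectra}\}$; hence by the standard adjunction between the action of $\Sp$ and the homotopy mapping spectrum, the $T'$-local objects are exactly those $Z$ for which $\mathbb{R}\Map_{R\leftmod}(X,Z)$ is an $E$-local spectrum for every $X\in R\leftmod$.

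The remaining step is to check that these two classes coincide, which is the main technical obstacle. One direction is immediate: setting $X=R$ gives $\mathbb{R}\Map_{R\leftmod}(R,Z)\simeq Z$ as spectra, so any $T'$-local $Z$ has $E$-local underlying spectrum. For the converse, suppose the underlying spectrum of $Z$ is $E$-local. For a free cell $R\smashprod W$ we have $\mathbb{R}\Map_{R\leftmod}(R\smashprod W, Z)\simeq \mathbb{R}\Map_\Sp(W,Z)$, which is $E$-local because $E$-acyclics are closed under smashing with any spectrum, so $E$-locals are closed under $F_\Sp(W,-)$. An arbitrary cofibrant $X$ is built from such free cells by pushouts and transfinite compositions, and its mapping spectrum is the corresponding homotopy limit of $E$-local mapping spectra; since $E$-local spectra form a reflective subcategory closed under homotopy limits, $\mathbb{R}\Map_{R\leftmod}(X,Z)$ is $E$-local. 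This establishes equality of the local classes, hence $(R\leftmod)_E = L_E(R\leftmod)$, and the special case $R=\mathbb{S}$ gives $\Sp_E = L_E\Sp$.
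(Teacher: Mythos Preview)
Your argument is correct. The paper itself offers no proof beyond the \qed\ mark, so there is nothing to compare against in detail; your expansion via matching the classes of local objects is exactly the kind of justification the corollary needs.

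One small streamlining you might consider: rather than first characterising the $T'$-local objects through the enriched adjunction and then proving both inclusions, you can get one inclusion for free by observing that every map in $T'$ is already an $E$-equivalence of $R$-modules. Indeed, for $X\in R\leftmod$ and $f$ an $E$-equivalence of spectra one has $E\smashprod^L(X\smashprod^L f)\simeq X\smashprod^L(E\smashprod^L f)$, which is a weak equivalence; hence every $E$-local $R$-module is automatically $T'$-local. The only nontrivial direction is then the one you treat in your final paragraph, namely that a $T'$-local $Z$ (equivalently, one with $E$-local underlying spectrum via $X=R$) has $\mathbb{R}\Map_{R\leftmod}(X,Z)$ $E$-local for all $X$, and your cell-and-homotopy-limit argument handles that cleanly. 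The one assertion you invoke without proof---that an $R$-module is $E$-local in $L_E(R\leftmod)$ if and only if its underlying spectrum is $E$-local---is standard, but note that its nontrivial half is established by the very same cell argument you give, so you could fold the two together.
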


We can now give a a simple proof
that stable $E$-familiarisation preserves Quillen equivalences. 

\begin{proposition}\label{prop:quillenequivalence}
Let $\ccal$ and $\ecal$ be proper, cellular and stable model categories
such that the domains of their generating cofibrations are cofibrant. 
Let $$F : \ccal \lradjunction \ecal : G$$ be a Quillen equivalence. Then 
there is a Quillen equivalence between the $E$-familiarised model categories
$$F : \ccal_E \lradjunction \ecal_E : G.$$
\end{proposition}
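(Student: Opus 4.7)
The plan is to first construct the Quillen adjunction $F \co \ccal_E \lradjunction \ecal_E \co G$ using the universal property of stable $E$-familiarisation, and then upgrade it to a Quillen equivalence via the description of weak equivalences in Theorem \ref{thm:Eexistence}(2). For the adjunction, I would consider the composite left Quillen functor $\ccal \xrightarrow{F} \ecal \to \ecal_E$. Since $\ecal_E$ is stably $E$-familiar by Theorem \ref{thm:Eexistence}(1), the universal property in Theorem \ref{thm:Eexistence}(3) yields a factorisation through $\ccal \to \ccal_E$, hence a left Quillen functor $F \co \ccal_E \to \ecal_E$ whose underlying functor is unchanged. The right adjoint $G$ is then automatically a right Quillen functor between the localised structures, giving the desired Quillen adjunction.

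To upgrade this to a Quillen equivalence, I would use the standard fact that a Quillen adjunction is a Quillen equivalence precisely when its total derived functors induce an equivalence of homotopy categories. The derived functors $\mathbb{L}F \co \Ho(\ccal) \leftrightarrows \Ho(\ecal) \co \mathbb{R}G$ are already inverse equivalences by hypothesis. By Theorem \ref{thm:Eexistence}(2), $\Ho(\ccal_E)$ (respectively $\Ho(\ecal_E)$) is the further localisation of $\Ho(\ccal)$ (respectively $\Ho(\ecal)$) at the class $T'_\ccal=\{X \smashprod^L f \mid X \in \ccal, \ f \text{ an } E\text{-equivalence of spectra}\}$ (respectively $T'_\ecal$). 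By \cite[Theorem 6.3]{Len12}, left Quillen functors respect the action of spectra via stable frames up to homotopy, so $\mathbb{L}F(X \smashprod^L f) \simeq \mathbb{L}F(X) \smashprod^L f$; hence $\mathbb{L}F$ carries $T'_\ccal$ into $T'_\ecal$ up to weak equivalence, and symmetrically $\mathbb{R}G$ carries $T'_\ecal$ into $T'_\ccal$. The equivalence $(\mathbb{L}F, \mathbb{R}G)$ therefore descends to an equivalence $\Ho(\ccal_E) \simeq \Ho(\ecal_E)$, and $(F,G) \co \ccal_E \lradjunction \ecal_E$ is a Quillen equivalence.

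The main point of caution is the descent step: one must verify that matching the classes $T'_\ccal$ and $T'_\ecal$ under the inverse equivalences $(\mathbb{L}F, \mathbb{R}G)$ actually produces an equivalence of the further localisations, and that this equivalence agrees with the derived functors of $(F,G)$ viewed as a Quillen adjunction between $\ccal_E$ and $\ecal_E$. A clean alternative that bypasses this bookkeeping is to invoke \cite[Theorem 3.3.20]{hir03} directly: applied to the Quillen equivalence $(F,G)$ and the set $\Omega^\infty \fibrep S$ used to define $\ccal_E$, it yields a Quillen equivalence $\ccal_E \lradjunction L_{\mathbb{L}F(\Omega^\infty \fibrep S)} \ecal$, after which one identifies the right-hand model structure with $\ecal_E$ by comparing cofibrations and $T'$-equivalences using Theorem \ref{thm:Eexistence}(2).
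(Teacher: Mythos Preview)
Your proposal is correct and follows essentially the same approach as the paper. The paper obtains the Quillen adjunction via the universal property exactly as you do, then identifies $\ccal_E$ and $\ecal_E$ with localisations at the classes $T$ and $T'$ (your $T'_\ccal$ and $T'_\ecal$), invokes \cite[Theorem 3.3.20]{hir03} to get a Quillen equivalence $L_T\ccal \simeq L_{\mathbb{L}F(T)}\ecal$, and uses \cite[Theorem 6.3]{Len12} to identify $\mathbb{L}F(T)$ with $T'$ in $\Ho(\ecal)$; this is precisely the ``clean alternative'' you sketch at the end, applied to the class $T$ rather than to the defining set $\Omega^\infty\fibrep S$.
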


\begin{proof}
Composing $F$ with the identity on $\ecal$ gives us a left Quillen functor $$F \co \ccal \to \ecal_E$$ and $\ecal_E$ is of course stably $E$-familiar.
Hence by the universal property of $\ccal_E$ proved in Theorem \ref{thm:Eexistence} we have a left Quillen functor 
$F \co \ccal_E \to \ecal_E$. 
We now need to 
show that gives us a Quillen equivalence. We do so using
Proposition \ref{prop:classofequivalences} and the method of the second part of the proof of Theorem \ref{thm:Eexistence}. 

Let $T$ be the class of maps
\[
T=\{ A \smashprod^L f \,\,|\,\, A \in \ccal,\, f\, \mbox{is an $E$-equivalence of spectra}\}.
\]
Similarly, let $T'$ be the class of maps 
\[
T'=\{ B \smashprod^L f \,\,|\,\, B \in \ecal,\, f\, \mbox{is an $E$-equivalence of spectra}\}.
\]
Then $\ccal_E = L_{T} \ccal$ and $\ecal_E = L_{T'} \ecal$. 
Let $\mathbb{L}F$ and $\mathbb{R}G$ denote the left and right derived functors of $F$ and $G$ respectively.
By \cite[Theorem 3.3.20]{hir03}, the adjunction $(F,G)$ induces
a Quillen equivalence between $L_T \ccal$ and $L_{\mathbb{L}F(T)} \ecal$. 
But the set ${\mathbb{L}F(T)}$ is isomorphic in $\Ho \ecal$ to the set
$T'$ because Quillen equivalences induce equivalences
of $\Ho(\Sp)$-module categories \cite[Theorem 6.3]{Len12}.
\end{proof}

\section{Examples}\label{sec:examples}

Let $\ccal$ be a spectral model category, 
such that the domains of its generating cofibrations are cofibrant. 
(Recall from \cite[Theorem 7.2]{BarRoi12} that 
any stable, proper and cellular model category is Quillen equivalent
to a spectral one.)
Assume that $\ccal$ has a set of 
compact generators for its homotopy category, 
\cite[Definition 2.1.2]{ss03stabmodcat}. 
Schwede and Shipley prove in the above-mentioned paper that any such 
$\ccal$ is Quillen equivalent to a category $\rightmod\ecal$ where $\ecal$ can be thought of as a ``ring spectrum with several objects''. In the case of $\ccal$ having a single compact generator, $\ecal$ is simply a ring spectrum. 

Let us briefly recap some of the definitions and 
constructions of that result. 
Let $\mathcal{G}$ denote the set of generators of $\ccal$. Then the $\Sp$-enriched category $\ecal$ is simply defined as the full $\Sp$-enriched subcategory of $\ccal$ with objects $\mathcal{G}$. An object $M \in \rightmod\ecal$ consists of a spectrum $M(G)$ for each $G \in \mathcal{G}$ plus morphisms of spectra
\[
M(G) \wedge \ecal(G', G) \longrightarrow M(G') \,\,\,\mbox{for}\,\,\, G, G' \in \mathcal{G}
\]
satisfying certain coherence conditions. By adjunction, such an $M$ is the same as a contravariant spectral functor from $\ecal$ to $\Sp$. The model structure on $\rightmod\ecal$ is has 
weak equivalences and fibrations defined objectwise
\cite[Theorem A.1.1]{ss03stabmodcat}: meaning that a natural transformation $f: M \longrightarrow N$ is a weak equivalence or a fibration if and only if 
\[
f_G \co M(G) \longrightarrow N(G)
\] 
is so for each $G \in \gcal$. Theorem 3.9.3 of \cite{ss03stabmodcat} then describes a Quillen equivalence
\[
\Hom(\mathcal{G},-): \ccal \rladjunction \rightmod\ecal: - \wedge_{\ecal} \mathcal{G}
\]
for spectral $\ccal$.

\bigskip
This is a highly useful description of a stable model category
and we would like to obtain a description of the $E$-familiarisation $\ccal_E$ of $\ccal$ in terms of $\rightmod\ecal$. We note that this is a rather special case as not every stable model category has a set of compact generators \cite[Corollary B.13]{HovStr99}.

By Proposition \ref{prop:quillenequivalence} we know that $\ccal_E$ and $(\rightmod \ecal)_E$ are Quillen equivalent,
so we shall find another description of $(\rightmod \ecal)_E$. 

\bigskip
The generating cofibrations of this model category have the form 
\[
\hom(-,g) \smashprod (i \square j)
\] 
where $g$ is a cofibrant 
and fibrant replacement of one of the 
compact generators for $\ccal$, 
$i$ is a generating cofibration for $\Sp$ and $j$ is a generating
acyclic cofibration for $L_E \Sp$. 

We can make another model structure on $\rightmod \ecal$
by taking the same cofibrations as before, but taking the generating
set of acyclic cofibrations to be those maps of form
\[
\hom(-,g) \smashprod j
\]
for $g$ a generator and $j$ a generating
acyclic cofibration for $L_E \Sp$. We shall call this set of maps
$K$. One can either check directly that these sets give a model structure or one can
alter \cite[Theorem A.1.1]{ss03stabmodcat} to use $L_E \Sp$ instead of $\Sp$. 

We claim that this model structure equals the model structure of $(\rightmod\ecal)_E$. An element of $K$ 
can be described as 
\[
\hom(-,g) \smashprod \left( (\ast \to S^0) \square j \right)
\] 
Hence every element of $K$ is an acyclic cofibration of $(\rightmod \ecal)_E$.
Conversely, since $E$-acyclic cofibrations of spectra are closed under pushout along
cofibrations, it follows that the set of acyclic cofibrations generated by 
the set $K$ is precisely the set of acyclic cofibrations of $(\rightmod \ecal)_E$. Thus we have shown the following.

\begin{proposition}
The model category $(\rightmod \ecal)_E$ is the category of contravariant
spectral functors from $\ecal$ to $L_E \Sp$, equipped with the 
model structure where fibrations and weak equivalences are defined objectwise. 
Thus the fibrant objects are those functors $M$ such that 
$M(g)$ is fibrant in $\Sp$ and $E$-local for all $g \in \ecal$. \qed
\end{proposition}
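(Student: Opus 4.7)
The plan is to identify the model structure $(\rightmod\ecal)_E$ with the projective model structure on contravariant spectral functors $\ecal \to L_E\Sp$ by matching cofibrations and acyclic cofibrations explicitly. First, I would apply Proposition \ref{prop:espectral} with $\dcal = \rightmod \ecal$ equipped with its standard $\Sp$-valued objectwise model structure. This tells us that $(\rightmod\ecal)_E$ is the left Bousfield localisation of $\rightmod \ecal$ at the set $S = I_{\rightmod\ecal} \square J_E$. Since left Bousfield localisation preserves cofibrations, the generating cofibrations remain those of the form $\hom(-,g)\smashprod i$ with $i$ a generating cofibration of $\Sp$, and these coincide with the generating cofibrations of $L_E\Sp$.

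Next, I would introduce the candidate model structure $\mathcal{M}$: the projective model structure on contravariant spectral functors $\ecal \to L_E\Sp$, with weak equivalences and fibrations defined objectwise. Its existence can be obtained by running the argument of \cite[Theorem A.1.1]{ss03stabmodcat} with $L_E\Sp$ replacing $\Sp$, the small object argument going through because $L_E\Sp$ is itself a cellular spectral model category. The generating cofibrations of $\mathcal{M}$ are the same as those of $(\rightmod\ecal)_E$, and its generating acyclic cofibrations are the set
\[
K = \{ \hom(-,g) \smashprod j \ | \ g \in \mathcal{G}, \ j \in J_E \}.
\]
Thus it suffices to show that the classes of trivial cofibrations of $\mathcal{M}$ and of $(\rightmod\ecal)_E$ agree.

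For the containment $K \subseteq (\text{acyclic cofibrations of } (\rightmod\ecal)_E)$, I would use the identity $\hom(-,g) \smashprod j = \hom(-,g) \smashprod \left((\ast \to S^0) \square j\right)$, exhibiting each element of $K$ as an element of the set $S$ (up to canonical isomorphism) and hence as an acyclic cofibration after localisation. For the reverse containment, I would argue that the class of acyclic cofibrations generated by $K$ already absorbs the full set $S = I_{\rightmod\ecal} \square J_E$: the pushout-product of a generating cofibration $\hom(-,g) \smashprod i$ with $j \in J_E$ is constructed from $\hom(-,g) \smashprod j$ by pushout along a cofibration, and $E$-acyclic cofibrations of spectra are closed under such pushouts. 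Once both classes of generators generate the same saturated class, the two model structures coincide. The characterisation of fibrant objects as those $M$ with $M(g)$ fibrant in $L_E\Sp$ for every $g$ then follows by translating the right lifting property against $K$ into an objectwise condition. The main obstacle, and the only step needing care, is verifying the existence and cofibrant generation of the $L_E\Sp$-valued projective structure and controlling the pushout-product interactions that give the reverse containment.
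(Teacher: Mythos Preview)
Your proposal is correct and follows essentially the same approach as the paper's own argument. Both identify $(\rightmod\ecal)_E$ with the objectwise $L_E\Sp$-valued projective model structure by matching the generating sets: the inclusion $K \subseteq S$ via $\hom(-,g)\smashprod j = \hom(-,g)\smashprod\bigl((\ast\to S^0)\,\square\, j\bigr)$, and the reverse by noting that $i\,\square\, j$ lies in the saturation of $J_E$ (since $E$-acyclic cofibrations are closed under pushout along cofibrations), so that $\hom(-,g)\smashprod(i\,\square\, j)$ lies in the saturation of $K$.
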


Consider the case where $\ccal$ has a single compact generator. 
Following the above we can replace this by a category of functors to $\Sp$. 
Indeed, \cite[Theorem 3.1.1]{ss03stabmodcat} states that 
$\ccal$ is Quillen equivalent to the category of $R$--modules, $\rightmod R$, 
for some ring spectrum $R$. Hence we are essentially in 
the same situation as Corollary \ref{cor:Elocalmodule}. Our work above
recovers the well-known result that $(\rightmod R)_E$ is the category of $R$-modules, with weak equivalences the
underlying $E$-equivalences of spectra.

\section{\texorpdfstring{$E$--}{E-}familiarisation and homotopy pushouts}\label{sec:pushouts}

We want to give another description of $\ccal_E$ via a universal property. 
We will relate $\ccal_E$ to a pushout of model categories. 
While the pullback of model categories is well-understood, \cite{berg11}, 
the pushout is more complicated and is not often used. Roughly speaking, the homotopy pushout of a corner diagram of Quillen adjunctions
\[
\ccal \rladjunction \dcal \lradjunction \ecal
\]
is supposed to be a model category $\mathcal{P}$ that satisfies a universal property analogous to the pushout of a diagram within a category. Unfortunately, the homotopy-theoretic pushout construction is rather delicate and its existence and description not always clear.

\bigskip
However there is a special case where we can construct pushouts of model categories and verify that they have the correct universal property. 
By working in a particular context, we avoid the general question of whether 
homotopy pushouts of model categories exist in general. 

Let $\mcal_2$ be a left Bousfield localisation of $\mcal_1$ at a class of maps $W$. 
Without loss of generality we assume that the maps in $W$ are morphisms between cofibrant objects.
In particular, this gives us a Quillen pair \[
\id \co \mcal_1 \lradjunction \mcal_2 = L_W \mcal_1: \id
\]
Assume that we have a Quillen adjunction 
\[
F : \mcal_1 \lradjunction \ncal_1 : G
\]
We are now going to discuss the homotopy pushout of the corner diagram below for this special case
\[
L_W\mcal_1=\mcal_2 \rladjunction \mcal_1 \lradjunction \ncal_1.
\]

\begin{definition}\label{def:pushout}
The homotopy pushout of the above diagram is defined, if it exists, as the Bousfield localisation $L_{FW} \ncal_1$ of $\ncal_1$.
\end{definition}

To justify this definition we need to see that $\ncal_2=L_{FW}\ncal_1$ 
(provided it exists) satisfies the desired properties that a homotopy pushout is supposed to have.
First we note that by \cite[Theorem 3.3.20]{hir03} $F$ and $G$ induce a Quillen adjunction
\[
F :\mcal_2 \lradjunction \ncal_2 : G
\] 

Assume that there is a model category $\dcal$ with Quillen adjunctions 
\[
\begin{array}{rcl}
\mcal_2 & \lradjunction & \dcal \\
F' : \ncal_1 & \lradjunction & \dcal : G'
\end{array}
\]
such that in the diagram below, the two different composites 
of left adjoints from $\mcal_1$ to $\dcal$ agree up to natural isomorphism. 
\[
\xymatrix{ 
\mcal_1 \ar@<0.5ex>[r] \ar@<-0.5ex>[d] & 
\ncal_1 \ar@<0.5ex>@/^/[ddr] \ar[l] & \\
\mcal_2 \ar[u] \ar@<0.5ex>@/_/[drr] & & \\
& & \dcal \ar@/^/[ull] \ar@/_/[uul].
}
\]
Because the vertical functors in the square below are simply identity functors
it follows immediately that we may add $\ncal_2$ and obtain a 
commutative diagram of adjoint pairs. 
\[
\xymatrix{ 
\mcal_1 \ar@<0.5ex>[r] \ar@<-0.5ex>[d] & 
\ncal_1 \ar@<0.5ex>@/^/[ddr] \ar[l] \ar@<-0.5ex>[d]& \\
\mcal_2 \ar[u] \ar@<0.5ex>[r] \ar@<0.5ex>@/_/[drr] & 
\ncal_2 \ar[l] \ar[u] \ar@<0.5ex>@{.>}[dr]& \\
& & \dcal \ar@/^/[ull] \ar@/_/[uul] \ar@{.>}[ul]
}
\]
We must check that the adjunction below is Quillen adjunction. 
\[
F': \ncal_2 \lradjunction \dcal: G'
\]
The model category $\ncal_2$ is the Bousfield localisation of $\ncal_1$ with respect to the class of maps $Ff$ where $f$ is a weak equivalence between cofibrant objects of $\mcal_2$. 
Thus $(F'\circ F)(f)$ is a weak equivalence in $\dcal$. This means that $F'$ uniquely factors over $\ncal_2$. Furthermore, by construction, $\ncal_2$, if it exists, is unique
up to Quillen equivalence.

\bigskip
Recall that the stable $E$-familiarisation $\ccal_E$ satisfies the following universal property. Given a left Quillen functor $F: \ccal \longrightarrow \dcal$ with $\dcal$ stably $E$-familiar, $F$ also gives rise to a left Quillen functor $C_E \longrightarrow \dcal$ via
\[
\xymatrix{ \ccal \ar[rr]^{F} \ar[d]_{id} & & \dcal. \\
\ccal_E \ar[urr] & &
}
\]
This fact allows us to relate $\ccal_E$ and certain homotopy pushouts. Let $X \in \ccal$ be fibrant and cofibrant. Then we have a Quillen adjunction
\[
X \smashprod -: \Sp \lradjunction \ccal: \Map(X,-).
\]

Using Definition \ref{def:pushout} we can read off the following for a proper and cellular stable model category $\ccal$.
\begin{lemma}
The homotopy pushout $\Pu_X$ of the diagram 
\[
L_E\Sp \rladjunction \Sp \lradjunction \ccal
\]
exists and is the Bousfield localisation of $\ccal$ with respect to the set of maps
below, where $J_E$ is the set of generating acyclic cofibrations
of $L_E \Sp$. 
\[
X \smashprod^L J_E = 
\{ X \smashprod^L j \ | \ j \in J_E
\}
\] \qed
\end{lemma}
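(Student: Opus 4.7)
The plan is to read this off directly from Definition \ref{def:pushout}. I take $\mcal_1 = \Sp$, $\mcal_2 = L_E\Sp$, $\ncal_1 = \ccal$, and $F = X \smashprod -: \Sp \to \ccal$; since $X$ is fibrant and cofibrant, this is a left Quillen functor via stable frames. For the class $W$ required by Definition \ref{def:pushout}, I would choose the set $J_E$ of generating acyclic cofibrations of $L_E\Sp$. By construction these are maps between cofibrant spectra, and the small object argument characterises the fibrant objects of $L_{J_E}\Sp$ as the maps with the right lifting property against $J_E$, which are precisely the $E$-local $\Sp$-fibrant spectra. Hence $L_{J_E}\Sp$ and $L_E\Sp$ share both their cofibrations (those of $\Sp$) and their fibrant objects, so they coincide as model categories.

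With this choice Definition \ref{def:pushout} identifies the homotopy pushout of the corner diagram with $L_{F(J_E)}\ccal$. Because $X$ is cofibrant in $\ccal$ and each $j \in J_E$ is a cofibration between cofibrant spectra, the strict smash product $F(j) = X \smashprod j$ already represents the derived product $X \smashprod^L j$. Therefore $F(J_E)$ and $X \smashprod^L J_E$ agree as sets of maps up to weak equivalence, which gives
\[
\Pu_X \;=\; L_{F(J_E)}\ccal \;=\; L_{X \smashprod^L J_E}\ccal.
\]

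For existence I would appeal to Hirschhorn's theorem on left Bousfield localisations at a set: $\ccal$ is proper and cellular by hypothesis, and $X \smashprod^L J_E$ is a set of maps, so the Bousfield localisation $L_{X \smashprod^L J_E}\ccal$ exists. The main subtlety I expect is the freedom to replace the full class of $E$-local equivalences between cofibrant spectra by the generating set $J_E$ when applying Definition \ref{def:pushout}. This is justified by observing that an object $Z \in \ccal$ is $F(J_E)$-local precisely when $\Map(X,Z)$ is $J_E$-local, equivalently $E$-local, and this characterisation depends only on the localisation $L_E\Sp$ rather than on any particular generating set for it; consequently any two admissible choices of $W$ produce model categories on $\ccal$ with the same local objects, and hence the same left Bousfield localisation.
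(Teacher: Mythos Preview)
Your proposal is correct and follows exactly the route the paper takes: the lemma carries a \qed\ and is simply read off from Definition~\ref{def:pushout} with $\mcal_1=\Sp$, $\mcal_2=L_E\Sp$, $\ncal_1=\ccal$ and $F=X\smashprod-$. You in fact supply more justification than the paper does, in particular your closing observation that the resulting localisation of $\ccal$ is independent of the admissible choice of $W$, which the paper leaves implicit in the uniqueness discussion following Definition~\ref{def:pushout}.
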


So in particular we know that this homotopy pushout exists.
Because $\ccal_E$ is stably $E$-familiar we have a commutative square of Quillen adjunctions
\[
\xymatrix{ \Sp \ar@<0.5ex>[r] \ar@<-0.5ex>[d] & \ccal \ar[l]\ar@<-0.5ex>[d] \\
L_E\Sp \ar[u] \ar@<0.5ex>[r] & \ccal_E \ar[u] \ar[l].
}
\]
By the universal property of $\Pu_X$, there is a Quillen adjunction $\Pu_X \lradjunction \ccal_E$ for each $X$. We can show that $\ccal_E$ is the ``closest'' model category to those pushouts in the following sense.

\begin{theorem}
The Quillen adjunction
\[
\ccal \lradjunction \ccal_E
\]
factors over $$\Pu_X \lradjunction \ccal_E$$ for all fibrant-cofibrant $X \in \ccal$. If there is any other stable $\dcal$ with a Quillen adjunction $$F: \ccal \lradjunction \dcal: G$$ that factors over $$\Pu_X \lradjunction \dcal$$ for all fibrant-cofibrant $X$, then $(F,G)$ also factors over $\ccal_E$.
\end{theorem}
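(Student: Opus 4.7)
The plan is to verify the two directions of the universal property separately, in each case by combining the characterisation of the weak equivalences of $\ccal_E$ as $T'$-equivalences (Theorem \ref{thm:Eexistence}) with the homotopical compatibility of left Quillen functors of stable model categories with the $\Ho(\Sp)$-action \cite[Theorem 6.3]{Len12}.

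For the first statement, I fix a fibrant-cofibrant $X \in \ccal$. The localising set used to construct $\Pu_X$ is $X \smashprod^L J_E$, and each such map manifestly lies in the class $T'$ and is therefore a weak equivalence in $\ccal_E$ by Theorem \ref{thm:Eexistence}. Consequently the identity $\ccal \to \ccal_E$ already inverts the generating set of $\Pu_X$, and \cite[Theorem 3.3.20]{hir03} produces the desired extension to a left Quillen functor $\Pu_X \to \ccal_E$; the composite $\ccal \to \Pu_X \to \ccal_E$ agrees with the original identity by the universal property of Bousfield localisation.

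For the second statement, suppose $F \co \ccal \lradjunction \dcal \co G$ factors through $\Pu_X \lradjunction \dcal$ for every fibrant-cofibrant $X \in \ccal$. Unpacking what this factorisation says, for every such $X$ and every $j \in J_E$ the map $\mathbb{L}F(X \smashprod^L j)$ is a weak equivalence in $\dcal$. By \cite[Theorem 6.3]{Len12} this map is isomorphic in $\Ho(\dcal)$ to $F(X) \smashprod^L j$, so the stable frame on $F(X)$ gives a left Quillen functor $F(X) \smashprod - \co \Sp \to \dcal$ that inverts $J_E$. A further application of \cite[Theorem 3.3.20]{hir03} extends this to a left Quillen functor $L_E\Sp \to \dcal$, whence $\mathbb{L}F(X \smashprod^L f)$ is a weak equivalence in $\dcal$ for every $E$-equivalence $f$ of spectra. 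Passing to a cofibrant-fibrant replacement removes the restriction on $X$, so $\mathbb{L}F$ inverts every element of the class $T'$. Since $\ccal_E$ has cofibrations those of $\ccal$ and weak equivalences the $T'$-equivalences, one final application of \cite[Theorem 3.3.20]{hir03} produces the required left Quillen functor $\ccal_E \to \dcal$, completing the factorisation of $(F,G)$ through $\ccal_E$.

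The main non-trivial step is the middle of the second direction: recognising $\mathbb{L}F(X \smashprod^L j)$ as a map obtained from the stable frame on $F(X)$, and then extending $F(X)\smashprod -$ to a Quillen functor out of $L_E\Sp$. This is where the entire machinery of stable frames and \cite[Theorem 6.3]{Len12} does real work. The remainder of the argument is formal bookkeeping, and the fact that inversion of the generating class $T'$ actually suffices to descend $F$ to $\ccal_E$ is exactly what part (2) of Theorem \ref{thm:Eexistence} was designed to provide.
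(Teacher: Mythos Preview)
Your proof is correct and follows essentially the same route as the paper: both parts reduce to comparing the localising set $X \smashprod^L J_E$ for $\Pu_X$ with the class $T'$ defining $\ccal_E$ from Theorem \ref{thm:Eexistence}. Your treatment of the second direction is more explicit than the paper's --- you spell out the passage from inverting $X \smashprod^L J_E$ to inverting all of $T'$ via Lenhardt's compatibility and the extension of $F(X)\smashprod -$ to $L_E\Sp$, whereas the paper asserts this step directly --- but this is elaboration rather than a different argument.
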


\begin{proof}
The pushout $\Pu_X$ is defined as the Bousfield localisation of $\ccal$ at the set of maps $X \smashprod j$ with $j \in J_E$. 
By Proposition \ref{prop:classofequivalences} we know that 
$\ccal_E$ is the localisation of $\ccal$ at the class of maps of form 
$X \smashprod f$ for $f$ and $E$-equivalence of spectra. 
Thus we see that for every $X \in \ccal$ the identity gives us a Quilen adjunction
\[
\id: \Pu_X \lradjunction \ccal_E: \id
\]
because every weak equivalence in $\Pu_X$ is also a weak equivalence in $\ccal_E$.

If the given Quillen adjunction $(F,G)$ induces a Quillen adjunction
\[
F: \Pu_X \lradjunction \dcal: G,
\]
then $F$ sends all morphisms of the form $X \smashprod^L j$ 
 to weak equivalences in $\dcal$. Thus it induces a Quillen adjunction
\[
\ccal_E \lradjunction \dcal
\]
which is what we wanted to prove.
\end{proof}

\section{Modular rigidity for 
\texorpdfstring{$E$--}{E-}local spectra}\label{sec:Elocal}

We can show that stable frames encode all homotopical information of the $E$-local stable homotopy category. The triangulated structure of $\Ho(L_E\Sp)$ alone is not sufficient for this: given just a triangulated equivalence
\[
\Phi: \Ho(L_E \Sp) \longrightarrow \Ho(\ccal)
\]
for a stable model category $\ccal$ does not imply in general that $L_E\Sp$ and $\ccal$ are Quillen equivalent. In fact, Quillen equivalence can only be deduced from a triangulated equivalence of homotopy categories in some very special cases. To this date, the only nontrivial cases known of this `rigidity' are the stable homotopy category itself \cite{Sch07} and the case $E = K_{(2)}$ \cite{Roi07}. However, if do not only have a triangulated equivalence as above but also assume that this equivalence is a $\Ho(\Sp)$-module equivalence, we can show that $L_E\Sp$ and $\ccal$ are Quillen equivalent. 

For the case of $E$ smashing, the result below appeared as \cite[Theorem 9.5]{BarRoi11b}, but with a slight modification it would also work for all $E$ such that $\Ho(L_E\Sp)$ has one compact generator, e.g. $E=K(n)$. The proof relied on the result of Schwede and Shipley \cite[Theorem 3.1.1]{ss03stabmodcat} that every stable model category $\ccal$ such that $\Ho(\ccal)$ has one compact generator is Quillen equivalent to the category of modules over some ring spectrum. However, for some $E$ the $E$-local stable homotopy category does not possess any compact objects at all \cite[Corollary B.13]{HovStr99}, let alone a set of compact generators. But in fact a more general result is true.

\begin{theorem}\label{thm:elocalrigid}
Let $\ccal$ be a stable model category.
Assume we have an equivalence of triangulated categories
\[
\Phi: \Ho(L_E\Sp) \longrightarrow \Ho(\ccal)
\]
then $L_E\Sp$ and $\ccal$ are Quillen equivalent if and only if $\Phi$ is an equivalence of $\Ho(\Sp)$-model categories. 
\end{theorem}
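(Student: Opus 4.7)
The ($\Rightarrow$) direction is immediate: a Quillen equivalence between stable model categories induces an equivalence of the underlying $\Ho(\Sp)$-module structures on the homotopy categories, since the stable framings construction is natural with respect to Quillen adjunctions \cite[Theorem 6.3]{Len12}. So I concentrate on ($\Leftarrow$). Suppose $\Phi$ is a $\Ho(\Sp)$-module equivalence. The first step is to show that $\ccal$ is stably $E$-familiar. Because $L_E\Sp$ is itself an $L_E\Sp$-model category, its derived mapping spectra $\mathbb{R}\Map_{L_E\Sp}(A,B)$ are $E$-local for all $A,B$. A $\Ho(\Sp)$-module equivalence transports these to derived mapping spectra in $\ccal$, so every $\mathbb{R}\Map_\ccal(\Phi A, \Phi B)$ is $E$-local; essential surjectivity of $\Phi$ then gives $E$-locality of all derived mapping spectra in $\ccal$. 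Applying the characterisation of stably $E$-familiar model categories from Section~\ref{sec:framings}, I conclude that $\ccal$ is stably $E$-familiar.

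Next, pick a fibrant-cofibrant $X \in \ccal$ with $X \simeq \Phi(L_E S^0)$ in $\Ho(\ccal)$. Using stable frames, $X$ gives rise to a Quillen adjunction $X \smashprod - \co \Sp \lradjunction \ccal \co \Map(X,-)$, and since $\ccal$ is stably $E$-familiar this factors through $L_E \Sp$ to yield
\[
X \smashprod - \co L_E\Sp \lradjunction \ccal \co \Map(X,-).
\]
The remaining task is to verify that this is a Quillen equivalence, or equivalently that the total left derived functor $\mathbb{L}(X \smashprod -) \co \Ho(L_E\Sp) \longrightarrow \Ho(\ccal)$ is an equivalence of categories.

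For this I compare $\mathbb{L}(X \smashprod -)$ with $\Phi$. Both are $\Ho(\Sp)$-module functors out of $\Ho(L_E\Sp)$, and by construction both send $L_E S^0$ to $X$. The key observation is that any $\Ho(\Sp)$-module functor $F$ from $\Ho(L_E\Sp)$ is determined up to natural isomorphism by its value on $L_E S^0$: for any $Y \in \Ho(L_E\Sp)$ we have $Y \simeq L_E S^0 \smashprod^L Y$ under the $\Ho(\Sp)$-action, so module-linearity gives $F(Y) \simeq F(L_E S^0) \smashprod^L Y$. Applied to $\mathbb{L}(X \smashprod -)$ and $\Phi$ this produces a natural isomorphism between them. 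Since $\Phi$ is an equivalence, so is $\mathbb{L}(X \smashprod -)$, completing the proof of Quillen equivalence. The main technical point is the first paragraph: one needs the passage ``$\Phi$ a $\Ho(\Sp)$-module equivalence implies derived mapping spectra in $\ccal$ are $E$-local'' to be rigorous, which rests on the fact that the $\Ho(\Sp)$-action via framings encodes the derived mapping spectrum up to weak equivalence.
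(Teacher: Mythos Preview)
Your proof is correct and follows essentially the same approach as the paper: both argue the forward direction via \cite[Theorem 6.3]{Len12}, then for the converse use the $\Ho(\Sp)$-module equivalence to transport $E$-locality of mapping spectra from $L_E\Sp$ to $\ccal$ (invoking the characterisation of stably $E$-familiar categories), construct the Quillen functor $X\smashprod-$ for $X\simeq\Phi(L_ES^0)$, and identify its derived functor with $\Phi$ via the module structure and the fact that $L_ES^0$ is the unit. The paper compresses your final comparison into the single line $X\smashprod^L(-) = \Phi(L_ES^0)\smashprod^L(-) = \Phi(L_ES^0\smashprod^L-) = \Phi(-)$, which is exactly your ``determined by the value on $L_ES^0$'' argument written out.
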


\begin{proof}
The ``only if'' part is true by \cite[Theorem 6.3]{Len12}: a Quillen equivalence induces a $\Ho(\Sp)$-module equivalence. 

Now let us assume that we have a $\Ho(\Sp)$-module equivalence
\[
\Phi: \Ho(L_E\Sp) \longrightarrow \Ho(\ccal)
\] 
It follows that $\Phi^{-1}$ induces a weak equivalence of homotopy
mapping spectra
\[
\Phi^{-1} \co \mathbb{R} \Map_\ccal(X,Y) 
\longrightarrow 
\mathbb{R} \Map_\ccal(\Phi^{-1} X,\Phi^{-1} Y)
\]
for $X, Y \in \ccal$.
The right-hand-side is an $E$-local spectrum as $L_E\Sp$ is stably $E$-familiar.
Hence every homotopy mapping spectrum of $\ccal$
is $E$-local, so $\ccal$ is stably $E$-familiar
by \cite[Theorem 7.8]{BarRoi11b}. 

Thus for fibrant and cofibrant $X \in \ccal$ , the Quillen functor
\[
X \smashprod -: \Sp \longrightarrow \ccal
\]
factors over $L_E\Sp$ as a Quillen functor
\[
X \smashprod -: L_E \Sp \longrightarrow \ccal.
\]
Now we consider $X$ a cofibrant-fibrant replacement of $\Phi(S^0)$.  Because $\Phi$ is a $\Ho(\Sp)$-module equivalence we see that
\[
X \dsmash (-) = \Phi(L_E S^0) \dsmash (-) = \Phi(L_E S^0 \dsmash - ) = \Phi(-).
\]
This means that $\Phi$ is derived from a Quillen functor. This 
Quillen functor must therefore be a Quillen equivalence, 
which is what we wanted to prove. 
\end{proof}

%\bibliography{modular}
\bibliographystyle{alpha}

\end{document}